\newtheorem{theorem}{Theorem}[section]
\newtheorem{thm}[theorem]{Theorem}
\newtheorem{lem}[theorem]{Lemma}
\newtheorem{prop}[theorem]{Proposition}
\newtheorem{cor}[theorem]{Corollary}
\theoremstyle{definition}
\theoremstyle{remark}
\newtheorem{rem}[theorem]{Remark}
\newtheorem{ex}[theorem]{Example}
\newcommand{\fm}{\mathfrak m}
\newcommand{\la}{\lambda}
\newcommand{\h}{\rm{H}}
\newcommand{\Ap}{\textrm{Ap}}
\newcommand{\w}{\omega}
\begin{document}

\title[Invariants of the tangent cone]{Apery and  micro-invariants of a one dimensional Cohen-Macaulay local ring
and invariants of its tangent cone}

\author{Teresa Cortadellas }
\address{Departament d'\`{A}lgebra i Geometria, Universitat de Barcelona, Gran Via 585, 08007 Barcelona}
 \email{terecortadellas@ub.edu}
\thanks{Partially supported by MTM2007-67493}

\author{Santiago Zarzuela }
\address{Departament d'\`{A}lgebra i Geometria, Universitat de Barcelona, Gran Via 585, 08007 Barcelona}
\email{szarzuela@ub.edu}
\thanks{}

\begin{abstract}
Given a one-dimensional equicharacteristic
Cohen-Macaulay local ring $A$, Juan Elias introduced in 2001 the
set of micro-invariants of $A$ in terms of the first neighborhood
ring. On the other hand, if $A$ is a one-dimensional complete
equicharacteristic and residually rational domain,  Valentina
Barucci and Ralf Froberg defined in 2006 a new set of invariants
in terms of the Apery set of the value semigroup of $A$. We give a
new interpretation for these sets of invariants that allow to
extend their definition to any one-dimensional Cohen-Macaulay
ring. We compare these two sets of invariants with the one
 introduced by the authors for the tangent cone of a
one-dimensional Cohen-Macaulay local ring and give explicit
formulas relating them. We show that, in fact, they coincide if
and only if the tangent cone $G(A)$ is Cohen-Macaulay. Some
explicit computations will also be given.
\end{abstract}
\maketitle{}

\section{Introduction}
\label{A} Let $(A, \fm)$ be a one dimensional Cohen-Macaulay local
ring with infinite residue field and set $G(\fm):= \bigoplus
_{n\geq 0} \fm ^n / \fm ^{n+1}$ for its tangent cone. In recent
years, several new families of numerical sets have been defined in
order to study its structure and properties. We will denote by $e$
the multiplicity of the ring $A$ and by $r$ its reduction number.

The authors have observed in \cite{CZ} that if $xA$ is a minimal
reduction of $\fm$ the corresponding Noether normalization
$$ F(x):= \bigoplus_{n\geq 0} \frac{x^nA}{x^n\fm }\hookrightarrow
G(\fm):= \bigoplus_{n\geq 0} \frac{\fm^n}{\fm^{n+1}}$$ provides a
decomposition of $G(\fm)$ as a direct sum of graded cyclic
$F(x)$-modules of the form
$$G(\fm)\cong F(x)\bigoplus_{i=1}^{e-1}F(x)(-r_i)
\bigoplus_{j=1}^{f}\left(\frac{F(x)}{(x^\ast)^{t_j}F(x)}\right)(-s_j)$$
for some integers $1\leq s_1\leq \cdots \leq s_{f}$ and $r_1\leq
\cdots \leq r_{e-1}$ and where $x^\ast$ denotes the class of $x$ in
$\frac{(x)}{\fm (x)}\subseteq F(x)$. In the same paper, the above
decomposition is rewritten as
$$G(\fm)\cong
\bigoplus_{i=0}^{r}\left(F(x)(-i)\right)^{\alpha_i}
\bigoplus_{i=1}^{r-1}\bigoplus_{j=1}^{r-i}\left(\frac{F(x)}{(x^\ast)^jF(x)}(-i)\right)^{\alpha_{i,j}},
$$
with $\alpha_0=1$, $\alpha_r\neq 0$ and
$\displaystyle{\sum_{i=1}^{r}\alpha_i}=e-1$.

It turns out that the numbers $\alpha _1, \dots , \alpha _r$ are
independent of the chosen minimal reduction, while the $\alpha
_{i,j}$ depend on it.  For the purpose of this paper we call
$\{\alpha _i,\alpha_{i,j}\}$ the set of invariants of the tangent
cone (with respect to $x$).
\medskip

Let $A'$ the first neighborhood ring of $A$ and assume that $A$ is
equicharacteristic and complete. Then $A$ has a coefficient field
$K$ and a transcendental element $x$ such that $W:=K[[x]]\subset
A$ is a finite extension, $xA$ being a minimal reduction of $\fm$.
Juan Elias observed in \cite{E} that $A'/A$ is a torsion finitely
generated $W$-module and that there exist integers $a_1\leq \cdots
\leq a_{e-1}$ such that
$$ \frac{A'}{A} \cong \bigoplus_{j=1}^{e-1} \frac{W}{x^{a_j}W}\, .$$
 In fact, it may be seen that $a_j\leq r$ and that
the numbers $\{a_1,\dots, a_{e-1}\}$ are independent of the chosen
minimal reduction $xA$ and he defines this set of numbers as the
set of micro-invariants of $A$. By considering $\beta_i= \# \{j;a_j=i\} $  the above decomposition  can be rewritten as
$$ \frac{A'}{A}
\cong \bigoplus_{i=1}^{r}
\left(\frac{W}{x^{i}W}\right)^{\beta_i}.$$
 For the purpose of this paper we call $\{\beta_1,\dots ,
 \beta_r\}$ the set of micro-invariants of $A$.

\medskip

Now assume instead that $A$ is a complete equicharacteristic,
residually rational local domain with multiplicity $e$; that is,
$A$ is a subring of a formal power series ring $k[[t]]$, where
$k$ is a field, with conductor $(A:k[[t]])\neq 0$. Consider the
value semigroup $S:= v(A)=\{ v(a): 0\neq a\in A\}$ and
$\Ap(S)=\{w_0=0, w_1, \dots, w_{e-1}\}$, the Apery set of $S$ with
respect to $e$; that is, the set of the smallest elements in $S$
in each congruence class modulo $e$. An element $x$ with smallest
value $v(x)=e$ generates a minimal reduction of $A$.  A subset
$\{g_0=1, g_1,\dots ,g_{e-1}\}$ is an Apery basis of $A$ with
respect to $x$ if, for each $j$, $1\leq j \leq e-1$, the following
conditions are satisfied:

\begin{enumerate}
\item $v(g_j)=w_j$, \item if $g\in \fm^i +xA \setminus
\fm^{i+1}+xA$ and $v(g)=v(g_j)$ then $g_j\in \fm^i+xA$.
\end{enumerate}

\noindent Fixed an Apery basis $\{g_0=1, g_1,\dots ,g_{e-1}\}$
with respect to $x$ one may consider, for $1\leq j \leq e-1$, the
numbers $c_j$ as the largest $i$ such that $g_j\in \fm^i +xA$.
Observe that $c_j\leq r$. Then, if $\gamma_i=\# \{j;c_j=i\}
$, we call $\{ \gamma_1, \dots ,\gamma_r \}$ the set of Apery
invariants of $A$.
\medskip

\medskip
The main purpose of this paper is to relate these three families
of invariants by giving explicit formulas describing their
relations. The formulas are expressed in terms of colon
ideals that allow to characterize when the three families
coincide: this is precisely when the tangent cone is Cohen-Macaulay.
Moreover, we do this completely in general just assuming that the
ring $A$ is Cohen-Macaulay. For that, we first extend to any one
dimensional Cohen-Macaulay local ring the definitions of the
micro-invariants introduced by Elias and the Apery invariants.
Also, some computations are made in general when
the reduction number, the embedding dimension or the multiplicity
of $A$ are small. In the case of semigroup rings all
the computations can be done in terms of usual invariants of the
semigroup itself.

\section{Background and preliminaries}
 \label{B}
First, we set up some notation and definitions.  Let $(A,\fm)$ be
a one dimensional Cohen-Macaulay local ring with infinite residue
field, embedding dimension $b$, reduction number $r$ and
multiplicity $e$.

 \subsection{Multiplicity, embedding dimension and reduction
number}
 The length of an $A$-module $M$ will be denoted by
$\lambda (M)$ and its minimum number of generators by $\mu (M)$.
The \textit{embedding dimension} of $A$ is defined as the number
$b=\la(\fm/\fm^2)=\mu(\fm)$.

An element $x$ in $\fm^s$ is called \textit{superficial of degree
$s$} if $\fm^{n+s}=x\fm^n$ for all large $n$. Superficial elements
generate $\fm$-primary ideals, and hence are regular elements of
$A$. Being the residue field $A/\fm$ infinite, the ring $A$ has
superficial elements of degree one, and the ideals they generate
are the minimal reductions reductions of $\fm$. Let $xA$ be a
minimal reduction of $\fm$. Also, in our situation, the reduction
number of $\fm$ with respect to $xA$, that is, the minimum integer
$r$ such that $\fm^{r+1}=x\fm^r$ does not depend of the chosen
minimal reduction and it will be called the \textit{reduction
number} of $A$.

We consider $\h(n):=\mu(\fm^n) =\la(\fm^n/\fm^{n+1})$ the Hilbert
function of $\fm$ and $\h^1(n)=\displaystyle{\sum_{i=0}^n} \h(i)$
its  Hilbert-Samuel function. This is of polynomial type of degree
$1$, and the \textit{multiplicity} of $A$ is defined as the
integer $e$ such that $\h^1(n)=e(n+1)-\rho$ for all large $n$.

In the nice book by Judith D. Sally \cite{S2} dedicated to the
study of the numbers of generators of ideals in local rings, it is
proved that $\la(I/xI)=\la(A/xA)=e$ for any ideal $I$ of $A$ of
height 1. Thus, taking $I=\fm^n$ one has
$$e=\la(\fm^n/x\fm^n)=\mu(\fm^n) +\la(\fm^{n+1}/x\fm^n).$$
 Thus,
$e=\mu(\fm^n)=\mu(\fm^r)$ for $n\geq r$ and
$\mu(\fm^n)=e-\la(\fm^{n+1}/x\fm^n)<e$ for $n<r$. Also, a result
of Paul Eakin and Avinash Sathaye gives the lower bound $n+1\leq
\mu(\fm^n) $ for $n\leq r$ (an elementary proof of this bound in
the one dimensional case follows from \cite[Proposition 26]{CZ}).
In particular $r\leq e-1$ and $b=e-\la(\fm^2/x\fm) \leq e$.

In order to describe $\rho$, it is easy to see that for $n\geq r$
it is satisfied the equality
$$\h^1(n)=\mu(\fm^r)(n+1)+1+\mu(\fm)+\cdots +
\mu(\fm^{r-1})-r\mu(\fm^r),$$
 thus $$\rho=r\mu(\fm^r)-(1+\mu(\fm)+\cdots + \mu(\fm^{r-1}))=
e-1+\sum_{i=1}^{r-1} \la(\fm^{i+1}/x\fm^i).$$

\subsection{The invariants of the tangent cone}

 Let $xA$ be a minimal reduction of $\fm$ and $\alpha_i,
 \alpha_{i,j}$ the numbers defined in the introduction. The $\alpha_i$'s
 and the $\alpha_{i,j}$'s can be related in terms of lengths of colon ideals. In
order to express this fact we first define the numbers $f_{i,j}$
as

\[
f_{i,j}:=\la \left(
\frac{\fm^i\cap(\fm^{i+j+1}:x^{j})}{\fm^{i+1}}\right).
\]

\begin{rem}
\label{B3} Note that $f_{i,j}=0$ if $(i,j)\notin \{ (k,l) \mid
1\leq k \leq r-1 \textrm{ and } 1\leq l \leq r-i\}$, and also
$f_{r-1,1}=0$.
\end{rem}

Then, in \cite[Proposition 3, Proposition 7]{CZ} the following
result is proved.

\begin{lem}
\label{alpha} It holds:
\begin{enumerate}
\item for $  1\leq i \leq r-1$,
\[
\begin{array}{rl}  \alpha_i&= \la (\fm^i/(\fm^i\cap(\fm^r:x^{r-i-1})+x\fm^{i-1})) \\
                            &=\la(\fm^i/(\fm^i\cap(\fm^r:x^{r-i-1})))-\la(\fm^i/(\fm^i\cap(\fm^r:x^{r-i})))\\
                            &=\mu(\fm^i)-f_{i,r-i}-\mu(\fm^{i-1})+f_{i-1,r-i+1},
                             \\
\textrm{ and  } & \\
\alpha_r&=\la(\fm^{r}/(\fm^{r+1}+x\fm^{r-1}))=\la(\fm^{r}/x\fm^{r-1})=\mu(\fm^r)-\mu(\fm^{r-1}).
\end{array}
\]

\item
\[
 f_{k,l} =\sum_{(i,j)\in \Lambda} \alpha_{i,j}
 \]  where
$\Lambda=\{ (i,j): 1 \leq i \leq  k, \, k-i+1 \leq j \leq
  k-i+l\}$.

\item The $f_{i,r-i}$'s and so, the $\alpha_i$'s are independent
of the chosen minimal reduction $xA$ of $\fm$.
\end{enumerate}
\end{lem}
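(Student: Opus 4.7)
The plan is to read every quantity in the statement directly off the $F(x)$-module decomposition of $G(\fm)$ recorded above. The key dictionary is that $\fm^i\cap(\fm^{i+l+1}:x^l)/\fm^{i+1}$ equals the kernel of $(x^*)^l:G(\fm)_i\to G(\fm)_{i+l}$, while $x\fm^{i-1}/\fm^{i+1}$ is the image of $x^*:G(\fm)_{i-1}\to G(\fm)_i$. Once this dictionary is in place all three identities reduce to degree-by-degree counts inside the decomposition.

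I would dispatch part (2) first. By the preceding observation, $f_{k,l}=\la(\ker((x^*)^l:G(\fm)_k\to G(\fm)_{k+l}))$; free summands $F(x)(-i)$ contribute nothing, while a torsion summand $(F(x)/(x^*)^jF(x))(-i)$ contributes at degree $k$ exactly when $i\le k<i+j$, and its degree-$k$ element (namely $(x^*)^{k-i}$ times the generator) is annihilated by $(x^*)^l$ precisely when $j\le k-i+l$. Counting the relevant contributions reproduces the index set $\Lambda$. This in turn lets me establish the crucial vanishing $\alpha_{k,r-k}=0$ for $1\le k\le r-1$: the definition of $r$ gives $\fm^{r+1}=x\fm^r$, and combined with $x$ being a non-zero-divisor this yields $(\fm^{r+1}:x)=\fm^r$, so $f_{r-1,1}=0$ (Remark \ref{B3}). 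Substituting $(k,l)=(r-1,1)$ into the formula just proved forces $\sum_{k=1}^{r-1}\alpha_{k,r-k}=0$, and each summand vanishes as they are nonnegative.

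For part (1), the first identity is then immediate: taking the quotient of $G(\fm)_i$ by the image of $x^*$ from $G(\fm)_{i-1}$ kills every summand with shift strictly below $i$, and quotienting further by the kernel of $(x^*)^{r-i-1}$ kills every torsion summand $(F(x)/(x^*)^jF(x))(-i)$ with $j\le r-i-1$; using $\alpha_{i,r-i}=0$, all that survives is the free summand $F(x)(-i)^{\alpha_i}$. The second identity follows from additivity of length along the short exact sequence
\[
0\to x\fm^{i-1}/\bigl(x\fm^{i-1}\cap(\fm^r:x^{r-i-1})\bigr)\to \fm^i/\bigl(\fm^i\cap(\fm^r:x^{r-i-1})\bigr)\to \fm^i/\bigl(\fm^i\cap(\fm^r:x^{r-i-1})+x\fm^{i-1}\bigr)\to 0,
\]
together with the identity $x\fm^{i-1}\cap(\fm^r:x^{r-i-1})=x(\fm^{i-1}\cap(\fm^r:x^{r-i}))$ and the injectivity of multiplication by $x$, which identify the leftmost term with $\fm^{i-1}/(\fm^{i-1}\cap(\fm^r:x^{r-i}))$. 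The third identity comes by expanding $\mu(\fm^i)=\la(G(\fm)_i)$ in the decomposition, subtracting the torsion contribution $f_{i,r-i}$ from part (2), and telescoping against the analogous expression at degree $i-1$.

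For part (3), I would note that $f_{i,r-i}$ is intrinsic to $G(\fm)$: by part (2) it is the length of the degree-$i$ piece of the torsion submodule of $G(\fm)$ as an $F(x)$-module, and since $(x^*)$ is $G(\fm)_+$-primary (from $\fm^{r+1}\subseteq xA$) this torsion submodule coincides with $H^0_{G(\fm)_+}(G(\fm))$, manifestly independent of the choice of minimal reduction. The independence of each $\alpha_i$ then follows from the third identity of part (1), which expresses $\alpha_i$ in terms of the intrinsic quantities $\mu(\fm^{i-1}),\mu(\fm^i),f_{i-1,r-i+1},f_{i,r-i}$. The main obstacle is the vanishing $\alpha_{i,r-i}=0$: it is precisely what allows the colon ideal $(\fm^r:x^{r-i-1})$ to capture the torsion of $G(\fm)_i$ cleanly, and once it is in hand the rest is formal bookkeeping in the decomposition.
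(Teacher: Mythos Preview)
Your argument is correct and self-contained. Note that the paper itself does not supply a proof of this lemma: it merely records that the result is established in \cite[Proposition~3, Proposition~7]{CZ}. Your strategy of translating every length in the statement into a degree-by-degree count on the $F(x)$-decomposition of $G(\fm)$ is precisely the method used in that reference, so in substance you have reconstructed the cited proof rather than found an alternative one.

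One minor point deserves comment. Your short exact sequence for the second displayed identity in part~(1) yields
\[
\alpha_i=\la\bigl(\fm^i/(\fm^i\cap(\fm^r:x^{r-i-1}))\bigr)-\la\bigl(\fm^{i-1}/(\fm^{i-1}\cap(\fm^r:x^{r-i}))\bigr),
\]
with $\fm^{i-1}$ in the subtracted term, whereas the lemma as printed has $\fm^i$ there. Since $\fm^i\subseteq(\fm^r:x^{r-i})$ trivially, the printed version makes the subtracted term identically zero, which would contradict the first line unless $x\fm^{i-1}\subseteq(\fm^r:x^{r-i-1})$; this is a typographical slip in the statement, and the formula you establish is the one that is consistent with both the first line and the expansion $\mu(\fm^i)-f_{i,r-i}-\mu(\fm^{i-1})+f_{i-1,r-i+1}$ in the third line.
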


\begin{rem}
\label{known}  Some direct consequences for the tangent cone can
be immediately deduced from the above result on the structure of
$G(\fm)$ as $F(x)$-module.

For instance, the equalities
\[
0=f_{r-1,1}=\sum_{1\leq i\leq r-1} \alpha_{i,r-i}
\]
 imply that $\alpha_{i,r-i}=0$. So the $F(x)$-torsion submodule
of $G(\fm)$ has the form
\[
T(G(\fm))\cong
\bigoplus_{i=1}^{r-1}\bigoplus_{j=1}^{r-i-1}\left(\frac{F(x)}{(x^\ast)^jF(x)}(-i)\right)^{\alpha_{i,j}},
\]
which always vanishes if $r\leq 2$. Thus the tangent cone is
Cohen-Macaulay for $r$
 less or equal to $2$, as it is well known.
\end{rem}

In the next lemma we resume some characterizations in terms of
colon ideals of the Cohen-Macaulay property of the tangent cone
that will be used later on.

Given $a$ in $A$ and will denote by $a^{\ast}$ the  initial form
of $a$. That is, if $v$ is the largest integer $n$ such that $a\in
\fm^{n}$ then $a^{\ast}$ is the class of a in $\fm^{v}/\fm^{v+1}
\hookrightarrow G(\fm)$. Observe that $(x^i)^{\ast}=(x^{\ast})^i$.

\begin{lem}
\label{CM} The following conditions are equivalent:
\begin{enumerate}
\item $G(\fm)$ is a Cohen-Macaulay ring.

\item $(x^{\ast})^i$ is a regular element in $G(\fm)$ for some
(all) $i\geq 1$.

\item $\fm^n \cap x^iA = x^i\fm^{n-i}$ for all $n$.

 \item $(\fm^n :x^i)=\fm^{n-i}$ for some (all) $i\geq 1$ and
all $n$.

\item $\fm^{i}\cap (\fm^r:x^{r-i-1})=\fm^{i+1}$ for $1\leq i\leq
r-2$.

\end{enumerate}
\end{lem}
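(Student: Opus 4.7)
The plan is to prove the cycle $(1)\Leftrightarrow(2)\Leftrightarrow(3)\Leftrightarrow(4)\Rightarrow(5)\Rightarrow(1)$. The first four equivalences are standard Valabrega-Valla translations, while the last implication is where the structural content of Lemma \ref{alpha} enters. For $(1)\Leftrightarrow(2)$ I would use that $F(x) \cong k[x^*]$ is a graded Noether normalization of the one-dimensional graded ring $G(\fm)$, so $G(\fm)$ is Cohen-Macaulay iff $x^*$ acts as a nonzerodivisor; the ``some/all'' version for powers follows because $(x^*)^i y = 0$ entails $y = 0$ as soon as either $x^*$ or some fixed $(x^*)^i$ is known to be regular. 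The equivalences $(2)\Leftrightarrow(3)\Leftrightarrow(4)$ are the familiar graded-versus-filtered dictionary: injectivity of multiplication by $(x^*)^i$ in every graded degree is equivalent to $x^i A \cap \fm^n = x^i \fm^{n-i}$ for all $n$, which, since $x$ is a nonzerodivisor in $A$, is the same as $(\fm^n:x^i) = \fm^{n-i}$. Implication $(4)\Rightarrow(5)$ is then immediate: specialize $n = r$ and replace $i$ by $r-i-1$, then intersect with $\fm^i$.

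The real work is $(5)\Rightarrow(1)$. By the direct-sum decomposition of $G(\fm)$ as an $F(x)$-module recalled in the introduction, $G(\fm)$ is Cohen-Macaulay if and only if every torsion invariant $\alpha_{i,j}$ vanishes. Unwinding the definition of $f_{i,j}$, condition (5) is precisely the vanishing $f_{i,r-i-1} = 0$ for $1 \leq i \leq r-2$, and Lemma \ref{alpha}(2) applied with $(k,l)=(i,r-i-1)$ reads
\[
f_{i,r-i-1} \;=\; \sum_{k=1}^{i}\; \sum_{j=i-k+1}^{r-k-1}\alpha_{k,j}.
\]
I would argue by descending induction on $i$ from $r-2$ down to $1$: at $i = r-2$ the inner range collapses to the single value $j = r-k-1$, forcing $\alpha_{k,r-k-1} = 0$ for $1 \leq k \leq r-2$; at $i = r-3$ this already-vanishing diagonal drops out of the sum and only $\alpha_{k,r-k-2}$ is left to cancel; more generally, the step $i = r-2-s$ peels off the antidiagonal of pairs $(k,r-k-1-s)$. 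Combined with the automatic vanishing $\alpha_{k,r-k} = 0$ from Remark \ref{known}, this exhausts every nontrivial $\alpha_{k,j}$ and yields (1).

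The principal obstacle is this last implication, where one must identify the combinatorial shape of the index sets $\Lambda$ in Lemma \ref{alpha}(2) so that the finite family of hypotheses $f_{i,r-i-1}=0$ can be unwound inductively to kill one antidiagonal of torsion invariants at a time. Everything else is routine bookkeeping translating graded injectivity of $(x^*)^i$ on $G(\fm)$ into colon-ideal identities in $A$.
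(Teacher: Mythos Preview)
Your proof is correct, and the equivalences $(1)\Leftrightarrow(2)\Leftrightarrow(3)\Leftrightarrow(4)$ match the paper's treatment essentially verbatim (Valabrega--Valla plus regularity of $x$ in $A$).

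The difference lies in $(5)\Rightarrow(1)$. The paper does not go through the combinatorics of Lemma~\ref{alpha}(2). Instead it invokes \cite[Proposition~2]{CZ} directly, which identifies the torsion submodule as
\[
T(G(\fm)) \;=\; (0:_{G(\fm)}(x^{\ast})^{r-1}) \;=\; \bigoplus_{i=1}^{r-1}\bigl(\fm^i\cap(\fm^{r+1}:x^{r-i})\bigr)\big/\fm^{i+1},
\]
and then observes that $(\fm^{r+1}:x^{r-i})=(\fm^{r}:x^{r-i-1})$ because $\fm^{r+1}=x\fm^{r}$. Thus condition~(5) is \emph{literally} the vanishing of each graded piece of $T(G(\fm))$ (the piece $i=r-1$ being automatic), and no induction is needed. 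Your route---recognising condition~(5) as $f_{i,r-i-1}=0$ and then peeling off antidiagonals of $\alpha_{k,j}$'s by descending induction on $i$---reaches the same conclusion but trades a single structural identification for a triangular inversion argument. The paper's approach is shorter and more conceptual; yours has the merit of showing explicitly how the finite list of length conditions in~(5) cascades through the torsion invariants, which is a nice illustration of how Lemma~\ref{alpha}(2) can be inverted.
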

\begin{proof}
We fix $i\geq 1$. The element $(x^i)^{\ast}$ is a system of
parameters of $G(\fm)$. Hence the equivalence between (1) and (2)
 is clear. Moreover, since $x$ is regular in $A$, we have by the
result of Paolo Valabrega and Giuseppe Valla \cite[Corollary
2.7.]{VV} that $(x^i)^{\ast}$ is a regular element in $G(\fm)$ if
and only $\fm^n \cap (x^i)=x^i\fm^{n-i}$ for all $n$. Moreover, by
using the regularity of $x^i$ (or x) in $A$ this last equality is
equivalent with the equality of (4).

By \cite [Proposition 2]{CZ} the $F(x)$-torsion submodule of
$G(\fm)$ is
$$T(G(\fm))=H^0_{F(x)}(G(\fm))=(0:_{G(\fm)}
(x^{\ast})^{r-1})=\bigoplus_{i=1}^{r-1}(\fm^i\cap
(\fm^{r+1}:x^{r-i}))/\fm^{i+1}.$$ The tangent cone  $G(\fm)$ is
Cohen-Macaulay if and only if it is a free $F(x$)-module. Since
$(\fm^{r+1}:x^{r-i})=(\fm^{r}:x^{r-i-1})$ we have now the
equivalence of (5) with any of the other assertions.
\end{proof}

\begin{lem}
\label{sum-alpha} The following equality holds
$$ \sum_{i=1}^{r}i\alpha_i=\rho+\la(T(G(\fm))).$$
\end{lem}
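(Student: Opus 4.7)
The plan is to extract the identity from the Hilbert series of $G(\fm)$ computed from the $F(x)$-module decomposition displayed in the introduction. Since $F(x)\cong k[X]$ has Hilbert series $1/(1-t)$ and $F(x)/(x^{\ast})^j F(x)\cong k[X]/(X^j)$ has Hilbert series $(1-t^j)/(1-t)$, applying to each summand the degree shift by $-i$ (multiplication by $t^i$) gives
\[
\sum_{n\ge 0}\mu(\fm^n)\,t^n \;=\; \frac{P(t)}{1-t},\quad P(t):=\sum_{i=0}^{r}\alpha_i\, t^i+\sum_{i=1}^{r-1}\sum_{j=1}^{r-i}\alpha_{i,j}\, t^i(1-t^j).
\]

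Next I would read $\rho$ off as $P'(1)$. Multiplying by $1/(1-t)$ gives the Hilbert--Samuel series $P(t)/(1-t)^2$. Writing $P(t)=P(1)+(t-1)\widetilde Q(t)$ with $\widetilde Q(1)=P'(1)$, one expands $P(t)/(1-t)^2 = P(1)/(1-t)^2 - \widetilde Q(t)/(1-t)$, so for $n$ large the coefficient of $t^n$ equals $P(1)(n+1)-P'(1)$. Comparing with $\h^1(n)=e(n+1)-\rho$ yields $P(1)=e$ (consistent with $\sum_{i=0}^{r}\alpha_i=e$) and $\rho=P'(1)$.

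The remaining step is to evaluate $P'(1)$ termwise. Using $\frac{d}{dt}[t^i]\big|_{t=1}=i$ and $\frac{d}{dt}[t^i(1-t^j)]\big|_{t=1}=-j$ one obtains
\[
\rho \;=\; \sum_{i=1}^{r} i\,\alpha_i \;-\; \sum_{i=1}^{r-1}\sum_{j=1}^{r-i} j\,\alpha_{i,j}.
\]
By Remark \ref{known} we have $\alpha_{i,r-i}=0$, so the subtracted double sum coincides with $\sum_{i=1}^{r-1}\sum_{j=1}^{r-i-1}j\,\alpha_{i,j}$, which is precisely $\la(T(G(\fm)))$ since each summand $F(x)/(x^{\ast})^j F(x)$ has length $j$ as an $A$-module. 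Rearranging yields the claimed formula.

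I do not anticipate a serious obstacle here; the main care required is correct bookkeeping of the Hilbert series and the identification $\rho=P'(1)$ via the Taylor expansion of $P$ at $t=1$. The vanishing $\alpha_{i,r-i}=0$ from Remark \ref{known} is essential to collapse the weighted torsion sum to $\la(T(G(\fm)))$.
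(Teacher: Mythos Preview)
Your argument is correct and complete; the only mild point to make explicit is that the length $\la$ in the paper is the $A$-length, which for a finite-length graded $G(\fm)$-module agrees with its $k$-dimension, so each torsion summand $(F(x)/(x^{\ast})^j F(x))(-i)$ indeed contributes $j$ to $\la(T(G(\fm)))$.

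Your approach differs genuinely from the paper's. The paper's proof is purely algebraic bookkeeping: it plugs the formulas of Lemma~\ref{alpha}(1), namely $\alpha_i=\mu(\fm^i)-f_{i,r-i}-\mu(\fm^{i-1})+f_{i-1,r-i+1}$ and $\alpha_r=\mu(\fm^r)-\mu(\fm^{r-1})$, into $\sum_{i=1}^r i\alpha_i$ and telescopes. The $\mu$-part collapses to $r\mu(\fm^r)-(1+\mu(\fm)+\cdots+\mu(\fm^{r-1}))=\rho$, and the $f$-part collapses to $\sum_{i=1}^{r-2} f_{i,r-i}$, which by the description of $T(G(\fm))$ in the proof of Lemma~\ref{CM} equals $\la(T(G(\fm)))$. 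So the paper relies on the colon-ideal expressions for the $\alpha_i$ and the componentwise description of the torsion. Your route bypasses Lemma~\ref{alpha} entirely and reads the identity off the Hilbert series of the $F(x)$-decomposition, recovering $\rho$ as $P'(1)$. This is more conceptual and would work unchanged for any graded module with a similar free-plus-torsion decomposition over a polynomial ring; the paper's computation, on the other hand, ties directly into the colon-ideal invariants $f_{i,j}$ that are used throughout the rest of the paper.
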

\begin{proof}
By Lemma \ref{alpha} (1) we have that
$\sum_{i=1}^{r}i\alpha_i=r\mu(\fm^r)- (1+\mu (\fm)+\cdots +\mu
(\fm^{r-1}))+f_{1,r-1}+\cdots +f_{r-2,2}=\rho + \la(T(G(\fm)).$
\end{proof}

As a consequence of the above lemma we obtain the following
characterization for the Cohen-Macaulay property of the tangent
cone.

\begin{cor}
\label{sum-alpha-rho} $G(\fm)$ is Cohen-Macaulay if and only if
$\displaystyle{\sum_{i=1}^{r}}i\alpha_i=\rho$.
\end{cor}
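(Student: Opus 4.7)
The plan is to read this off immediately from Lemma \ref{sum-alpha} together with the structural description of $G(\fm)$ as an $F(x)$-module recalled in the introduction. By Lemma \ref{sum-alpha}, we have the identity
\[
\sum_{i=1}^{r} i\alpha_i \;=\; \rho + \lambda(T(G(\fm))),
\]
so the claimed numerical equality $\sum_{i=1}^{r} i\alpha_i = \rho$ is equivalent to $\lambda(T(G(\fm))) = 0$, i.e.\ to the vanishing of the $F(x)$-torsion submodule of $G(\fm)$.

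Next I would argue that $T(G(\fm)) = 0$ is exactly the Cohen-Macaulay condition on $G(\fm)$. Since $F(x) \hookrightarrow G(\fm)$ is a Noether normalization with $F(x)$ a one-dimensional polynomial ring over a field, the ring $G(\fm)$ is Cohen-Macaulay if and only if it is a free (equivalently, torsion-free) $F(x)$-module. This equivalence is already implicit in the decomposition stated in the introduction: the free summands contribute nothing to torsion, and the cyclic summands $(F(x)/(x^\ast)^j F(x))(-i)$ are precisely the torsion part, captured by the exponents $\alpha_{i,j}$. So the vanishing of $T(G(\fm))$ is equivalent to the Cohen-Macaulayness of $G(\fm)$, which is also one of the conditions in Lemma \ref{CM}.

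Combining these two observations yields the claimed characterization. There is essentially no obstacle here: the work is all contained in Lemma \ref{sum-alpha} and the description of $T(G(\fm))$ given in Remark \ref{known} and Lemma \ref{CM}; the corollary is just the assembly of these facts into a single numerical criterion.

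\begin{proof}
By Lemma \ref{sum-alpha}, $\sum_{i=1}^{r} i\alpha_i = \rho$ if and only if $\lambda(T(G(\fm))) = 0$, i.e.\ $T(G(\fm)) = 0$. On the other hand, the decomposition of $G(\fm)$ as an $F(x)$-module recalled in the introduction shows that $G(\fm)$ is a free $F(x)$-module precisely when all the cyclic summands $(F(x)/(x^\ast)^j F(x))(-i)$ are absent, which is equivalent to $T(G(\fm)) = 0$. Since $F(x) \hookrightarrow G(\fm)$ is a Noether normalization, freeness of $G(\fm)$ as an $F(x)$-module is equivalent to $G(\fm)$ being Cohen-Macaulay, as also follows from Lemma \ref{CM}. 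This gives the stated equivalence.
\end{proof}
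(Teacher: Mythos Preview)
Your proposal is correct and matches the paper's approach exactly: the paper presents Corollary~\ref{sum-alpha-rho} as an immediate consequence of Lemma~\ref{sum-alpha}, using that $G(\fm)$ is Cohen-Macaulay if and only if its $F(x)$-torsion vanishes (as noted in the proof of Lemma~\ref{CM}). Your write-up simply spells out these implicit steps.
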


\subsection{The micro-invariants of the ring}
Douglas G. Northcott defined the first neighborhood ring of $A$ as the set of
all elements, in the total ring of fractions $Q(A)$ of $A$, of the
form $\frac{b}{a}$, where $b\in \fm^s$ and $a$ is a superficial
element of degree $s$. This is a subring of $Q(A)$ containing $A$
and we will denote it by $A'$. Let $\overline A$ be the integral
closure of $A$ in $Q(A)$. We summarize in the following lemma some
of the basic facts on $A'$. For their proof we refer to the works
of Eben Matlis \cite[Chapter XII]{M} and Joseph Lipman
\cite[\S1]{L}, where this ring is studied in a more general
context.

\begin{lem}
\label{A'}
 With the notations above introduced the following hold:
\begin{enumerate}
 \item $A'= A\left[\frac{\fm}{x}\right].$
 \item $A'=\displaystyle{\bigcup_{n\geq 0} (\fm^n:_{\overline{A}} \fm^n)}=
 (\fm^r:_{\overline A}\fm^r)$.
 \item $A'$ is a finitely generated $A$-module, and hence is a semi-local, one dimensional Cohen-Macaulay ring.
 \item $x$ is a regular element of $A'$.
 \item $\fm^n A'=x^n A'$ for all
 $n$.
 \item $\fm^n=x^n A'$ for $n\geq r $.
 \item If $M$ is a finitely generated $A$-submodule of $Q(A)$ that
 contains a regular element element of $A$ then $\lambda (M/xM)=e$.
 \item $\la (A'/\fm^n A')=ne$ for all $n$ and $\la (A'/A)=\rho$.
  \end{enumerate}
\end{lem}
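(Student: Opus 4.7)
The plan is to prove (1) first, which provides the convenient description $A'=A[\fm/x]$, and then derive the remaining statements in the logical order (4), (5), (6), (3), (2), (7), (8); essentially everything after (1) is an exercise in manipulating this blow-up description. For (1), the inclusion $A[\fm/x]\subseteq A'$ is immediate because $x^s$ is superficial of degree $s$: from $\fm^{n+1}=x\fm^n$ for $n\geq r$ one gets $\fm^{n+s}=x^s\fm^n$ for $n\geq r$, so any $m/x^s$ with $m\in\fm^s$ lies in $A'$. For the reverse inclusion, given $b/a\in A'$ with $a$ superficial of degree $s$ and $b\in\fm^s$, one compares $a\fm^n=\fm^{n+s}=x^s\fm^n$ for $n\gg 0$ and concludes that $a$ is a unit multiple of $x^s$ inside $A[\fm/x]$. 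This superficial-element comparison is the step I expect to be the main obstacle.

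Once (1) is available, (4) is immediate: $x$ is a non-zero-divisor in $A$, hence a unit in $Q(A)$, hence regular in $A'\subseteq Q(A)$. Part (5) follows because $A'=A[\fm/x]$ says precisely that every element of $\fm$ is $x$ times an element of $A'$, so $\fm A'\subseteq xA'$; together with $x\in\fm$ this gives $\fm A'=xA'$, and induction yields the general case. For (6), one direction $\fm^n\subseteq\fm^n A'=x^nA'$ is immediate from (5); for the reverse, expanding an element of $x^nA'$ in terms of $A[\fm/x]$ produces summands of the form $x^{n-k}m_k$ with $m_k\in\fm^k$, each of which lies in $\fm^n$: for $k\leq n$ by a direct count, and for $k>n\geq r$ (hence $k>r$) by using $\fm^k=x^{k-r}\fm^r$ to absorb the negative powers of $x$.

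From (6), $A'$ coincides with $x^{-n}\fm^n$ inside $Q(A)$, which is $A$-isomorphic to the finitely generated ideal $\fm^n$; this gives (3), the semi-locality being automatic for a module-finite extension of a local ring and the Cohen-Macaulay property following from (4) combined with $\dim A'=\dim A=1$. For (2), the chain $\{(\fm^n:_{\overline A}\fm^n)\}_n$ is increasing and stabilizes at $n=r$, because $\fm^{n+r}=x^n\fm^r$ makes the colon condition at level $n+r$ equivalent to the one at level $r$. The inclusion $A'\subseteq(\fm^r:_{\overline A}\fm^r)$ follows from (6) together with $A'\subseteq\overline A$ (a consequence of (3)), and conversely any $y\in(\fm^r:_{\overline A}\fm^r)$ satisfies $y\cdot x^rA'\subseteq x^rA'$ by (6); cancelling $x^r$, which is regular, gives $yA'\subseteq A'$ and therefore $y\in A'$.

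Part (7) is a direct application of the result of Sally cited in the preliminaries: clearing denominators by a regular element $d\in A$ reduces $M$ to a height one ideal $dM\subseteq A$, for which $\la(dM/xdM)=e$, and this transfers back through the isomorphism $M\cong dM$. Applying (7) to $A'$ yields $\la(A'/xA')=e$, and the filtration by powers of $x$ (regular on $A'$) gives $\la(A'/x^nA')=ne$, which matches $\la(A'/\fm^nA')$ via (5). For the last assertion of (8), take $n\geq r$; then (6) gives $\fm^n=\fm^n A'$, so there is a short exact sequence
\[
0\longrightarrow A/\fm^n\longrightarrow A'/\fm^n A'\longrightarrow A'/A\longrightarrow 0,
\]
which yields $\la(A'/A)=ne-\h^1(n-1)=\rho$ by the definition of $\rho$ recalled in Section \ref{B}.
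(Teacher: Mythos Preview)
Your argument is correct in every part. The only point you flagged as a potential obstacle---showing that a superficial element $a$ of degree $s$ becomes a unit times $x^s$ in $A[\fm/x]$---goes through exactly as you suggest: from $a\fm^n=x^s\fm^n$ for $n\gg 0$ one has $x^{n+s}\in a\fm^n$, say $x^{n+s}=ac$ with $c\in\fm^n$; then $c/x^n\in A[\fm/x]$ and $(a/x^s)(c/x^n)=1$, so $a/x^s$ is a unit in $A[\fm/x]$ and $b/a=(b/x^s)(c/x^n)\in A[\fm/x]$. The rest of your deductions (the order $(4),(5),(6),(3),(2),(7),(8)$, the exact sequence for $\la(A'/A)$, and the use of Sally's result for (7)) are clean and correct.

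As for the comparison: the paper does not actually prove this lemma. It explicitly states that these facts are collected from the literature and refers the reader to Matlis \cite[Chapter~XII]{M} and Lipman \cite[\S1]{L} for proofs. So there is no ``paper's own proof'' to compare your approach to; you have supplied a self-contained elementary argument where the authors chose to cite standard references. Your proof has the merit of staying entirely within the blow-up description $A'=A[\fm/x]$ and the reduction-number identity $\fm^{n+1}=x\fm^n$ for $n\geq r$, which keeps it close to the tools already set up in Section~\ref{B}.
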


For any one dimensional Cohen-Macaulay local ring $(A, \fm)$ we
define the \textit{micro-invariants} of $A$ as the set of integers

\[
\beta_i=\la \left(\frac{ A+\fm^{i-1}A'}{A+\fm^iA'}\right )-\la
\left( \frac{A+\fm^{i}A'}{A+\fm^{i+1}A'}\right)
\]
 for $i\in \{1,\dots
,r\}$, and $\beta_0=1$.

\begin{lem}
\label{sum-beta} The following equalities hold
\begin{enumerate}
\item $\displaystyle{\sum_{i=1}^r} \beta_i=e-1,$

\item $\displaystyle{\sum_{i=1}^r }i\beta_i =\rho.$
\end{enumerate}
\end{lem}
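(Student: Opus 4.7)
The plan is to introduce the auxiliary lengths
\[
\ell_i := \lambda\!\left(\frac{A+\fm^i A'}{A+\fm^{i+1}A'}\right), \qquad i\geq 0,
\]
so that by definition $\beta_i = \ell_{i-1}-\ell_i$ for $1\leq i\leq r$. Both claims will then follow from a careful analysis of the (finite, decreasing) filtration
\[
A' \;=\; A+A' \;\supseteq\; A+\fm A' \;\supseteq\; A+\fm^2 A' \;\supseteq\; \cdots \;\supseteq\; A,
\]
where I first want to observe that this filtration stabilizes at $A$: indeed, by Lemma \ref{A'}(6) we have $\fm^i = x^iA' \subseteq A$ for $i\geq r$, so $A+\fm^iA' = A$ and $\ell_i = 0$ for all $i\geq r$. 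Consequently, for part (1) the sum telescopes to
\[
\sum_{i=1}^{r}\beta_i \;=\; \sum_{i=1}^{r}(\ell_{i-1}-\ell_i) \;=\; \ell_0-\ell_r \;=\; \ell_0.
\]

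The content of part (1) is then the identity $\ell_0 = e-1$. Here $\ell_0 = \lambda(A'/(A+\fm A'))$, and by Lemma \ref{A'}(5) one has $\fm A' = xA'$, while Lemma \ref{A'}(7) applied to the $A$-module $A'$ gives $\lambda(A'/xA') = e$. It therefore suffices to show $\lambda((A+xA')/xA') = 1$, equivalently $A\cap xA' = \fm$. The inclusion $\fm \subseteq A\cap xA'$ is immediate from $\fm A' = xA'$. For the reverse inclusion, I would note that every maximal ideal of $A'$ contracts to $\fm$ (by Lemma \ref{A'}(3) and the fact that $A'$ is integral over $A$), so $xA' = \fm A'$ lies in the Jacobson radical of $A'$; hence $x$ is a non-unit in $A'$, and any unit $a\in A$ with $a = xa'$, $a'\in A'$, would yield $x(a'/a) = 1$, a contradiction. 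This is the one step that requires something non-formal, and is the main (though minor) obstacle.

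For part (2), I would apply Abel summation:
\[
\sum_{i=1}^{r} i\,\beta_i \;=\; \sum_{i=1}^{r} i(\ell_{i-1}-\ell_i) \;=\; \sum_{j=0}^{r-1}(j+1)\ell_j - \sum_{i=1}^{r}i\,\ell_i \;=\; \sum_{j=0}^{r-1}\ell_j - r\,\ell_r \;=\; \sum_{j\geq 0}\ell_j,
\]
using $\ell_r = 0$ and more generally $\ell_j = 0$ for $j\geq r$. Since the filtration above has $A'$ on top and $A$ at the bottom, and the $\ell_j$ are precisely the lengths of its successive quotients, this total is $\lambda(A'/A)$, which equals $\rho$ by Lemma \ref{A'}(8). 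This proves $\sum_{i=1}^{r} i\,\beta_i = \rho$ and completes the argument.
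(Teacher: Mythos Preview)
Your proof is correct and follows essentially the same route as the paper's: both arguments telescope the differences $\beta_i=\ell_{i-1}-\ell_i$ along the filtration $A'\supseteq A+\fm A'\supseteq\cdots\supseteq A$, reducing (1) to $\ell_0=\lambda(A'/(A+\fm A'))=e-1$ and (2) to $\sum_{j\geq 0}\ell_j=\lambda(A'/A)=\rho$ via Lemma~\ref{A'}(8). The only notable difference is that you spell out the equality $A\cap \fm A'=\fm$ (equivalently $\lambda((A+\fm A')/\fm A')=1$), which the paper asserts without comment; your justification via the Jacobson radical of $A'$ is sound.
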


\begin{proof}
For $(1)$ observe that $\beta_r=\la ((
A+\fm^{r-1}A')/(A+\fm^rA'))$ since
$A+\fm^rA'=A+\fm^r=A=A+\fm^{r+1}=A+\fm^{r+1}A'$, by Lemma \ref{A'}
$(5)$ and $(6)$. So
\[
\begin{split}
\sum_{i=1}^r \beta_i & =\la(A'/A+\fm A')=\la(A'/\fm A')-
\la(A/(A\cap \fm A') \\ &=\la(A'/\fm A')- \la(A/\fm )=e-1
\end{split}
\]
On the other hand,
\[
\sum_{i=1}^r i \beta_i  =
\sum_{i=1}^{r}\la((A+\fm^{i-1}A')/(A+\fm^{i}A'))=\la(A'/A) = \rho \] by Lemma \ref{A'} $(8)$ and
so we get $(2)$.
\end{proof}

The following result is an immediate consequence of the above
lemma
 and Corollary \ref{sum-alpha-rho}.

\begin{cor}
\label{sum-alpha=sum-beta} $G(\fm)$ is Cohen-Macaulay if and only
if $\sum_{i=1}^r i \alpha_i=\sum_{i=1}^r i \beta_i$.
\end{cor}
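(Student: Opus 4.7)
The plan is to combine the two normalization identities for $\sum i\alpha_i$ and $\sum i\beta_i$ already established in the excerpt, and then invoke the characterization of the Cohen-Macaulay property already available. This makes the statement essentially a one-line consequence rather than a new computation.

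First I would recall from Lemma \ref{sum-beta}(2) the unconditional identity
\[
\sum_{i=1}^{r} i\beta_i = \rho ,
\]
which holds for every one-dimensional Cohen-Macaulay local ring $A$, with no assumption on the tangent cone. Thus the hypothesis $\sum i\alpha_i = \sum i\beta_i$ can be rewritten as $\sum i\alpha_i = \rho$.

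Next I would apply Corollary \ref{sum-alpha-rho}, which asserts precisely that $G(\fm)$ is Cohen-Macaulay if and only if $\sum_{i=1}^{r} i\alpha_i = \rho$. Chaining the two equivalences yields the stated biconditional. Alternatively one may argue symmetrically by invoking Lemma \ref{sum-alpha}, which gives $\sum i\alpha_i = \rho + \la(T(G(\fm)))$; combined with the identity $\sum i\beta_i = \rho$ the condition $\sum i\alpha_i = \sum i\beta_i$ is equivalent to $\la(T(G(\fm))) = 0$, i.e.\ to the vanishing of the $F(x)$-torsion of $G(\fm)$, which is the Cohen-Macaulay property.

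There is no genuine obstacle here: both sides of the equality have been explicitly computed in earlier lemmas, and the Cohen-Macaulayness characterization has already been extracted in Corollary \ref{sum-alpha-rho}. The only thing to be careful about is to cite the correct identity for $\sum i\beta_i$ (namely part (2), not part (1), of Lemma \ref{sum-beta}) and to note that $\rho$ on both sides is the same invariant of $A$, so the comparison is meaningful without any further normalization.
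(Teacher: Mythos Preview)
Your proof is correct and follows exactly the paper's approach: the paper states that this corollary is an immediate consequence of Lemma~\ref{sum-beta} (giving $\sum i\beta_i=\rho$) together with Corollary~\ref{sum-alpha-rho} (giving $G(\fm)$ Cohen-Macaulay iff $\sum i\alpha_i=\rho$), which is precisely what you do. Your alternative argument via Lemma~\ref{sum-alpha} and the vanishing of $T(G(\fm))$ is equally valid and amounts to unwinding the proof of Corollary~\ref{sum-alpha-rho}.
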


 Assume now that $A$ is in addition equicharacteristic and
complete. Then $A$ has a coefficient field $K$, and the extension
$W:=K[[x]]\subseteq A$ is finite, where $W$ is a discrete
valuation ring. Notice that $A$ and $A'$ are finitely generated
$W$-modules without torsion and so $W$-free modules of rank $e$,
by Lemma \ref{A'} $(7)$.

Hence $ A'/A$ is a $W$-module of torsion and there exist integers
$a_0\leq \cdots \leq a_{e-1}$ such that
$$ \frac{A'}{A} \cong \bigoplus_{i=0}^{e-1} \frac{W}{x^{a_i}W}.  $$
The ideals $x^{a_0}W,\dots, x^{a_{e-1}}W$ are the invariants of
$A$ in $A'$. Elias shows in \cite{E} that $a_0=0$ and that these
numbers do not depend on $W$ as well. In fact, the following
holds, which gives the equivalence of the set of micro-invariants
as we have just defined and the one defined by Elias in \cite{E},
in the case $A$ is equicharacteristic and complete:

\begin{lem} \label{beta}
  \cite[Proposition 1-4]{E}
  For $i\geq 1$ it holds
$\beta_i=\# \{j; a_j=i\}= $ $$\la((x^rA+\fm^{r+i-1})/(x^rA+\fm^{r+i}))-\la((x^rA+\fm^{r+i})/(x^rA+\fm^{r+i+1})
.$$
\end{lem}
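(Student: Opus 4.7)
The statement bundles two assertions: that $\beta_i$ equals the $W$-module invariant $\#\{j : a_j = i\}$ coming from the Smith-type decomposition of $A'/A$, and that it admits a ``tangent-cone style'' expression as a second-order difference of lengths in a filtration based at $x^rA$. The plan is to treat these separately, since the second will reduce to the first via a shift by $x^r$.

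For the first equality, I would work directly with the $W$-module $M := A'/A$. By Lemma \ref{A'}(5), $\fm^k A' = x^k A'$, so $(A + \fm^k A')/A$ is exactly the submodule $x^k M \subseteq M$, and therefore
\[
\la\!\left(\frac{A + \fm^k A'}{A + \fm^{k+1}A'}\right) = \la(x^k M/x^{k+1}M) = \la(M/x^{k+1}M) - \la(M/x^k M).
\]
From the decomposition $M \cong \bigoplus_{j=0}^{e-1} W/x^{a_j}W$ one gets $M/x^k M \cong \bigoplus_j W/x^{\min(k,a_j)}W$, so $\la(M/x^k M) = \sum_j \min(k, a_j)$, whence $\la(x^k M/x^{k+1}M) = \#\{j : a_j > k\}$. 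Taking the telescoping difference that defines $\beta_i$ produces
\[
\beta_i = \#\{j : a_j > i-1\} - \#\{j : a_j > i\} = \#\{j : a_j = i\}.
\]

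For the second equality, the plan is to exhibit an isomorphism between the quotient $(A + \fm^k A')/(A + \fm^{k+1}A')$ and $(x^r A + \fm^{r+k})/(x^r A + \fm^{r+k+1})$ induced by multiplication by $x^r$. Iterating the reduction identity $\fm^{r+1} = x\fm^r$ gives $\fm^{r+k} = x^k \fm^r$, and combining with $\fm^r = x^r A'$ (Lemma \ref{A'}(6)) yields $\fm^{r+k} = x^{r+k}A'$. Thus $x^r A + \fm^{r+k} = x^r(A + x^k A')$. Since $x^r$ is a nonzerodivisor in $Q(A)$, multiplication by $x^r$ is injective, so it induces the claimed $W$-module isomorphism of successive quotients. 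Applying this with $k = i-1$ and $k = i$ and subtracting recovers the second formula.

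No step presents a serious obstacle: the first part is a direct application of the structure theorem for finitely generated torsion modules over the DVR $W$, while the second is a regularity bookkeeping argument. The only point worth verifying carefully is that the multiplication-by-$x^r$ map really is a well-defined isomorphism of the quotients in question; this is immediate once one views all the modules inside $Q(A)$ and uses the regularity of $x^r$ there.
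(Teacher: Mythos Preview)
Your proof is correct and follows essentially the same route as the paper's: both reduce the first equality to the elementary-divisor count via $\fm^k A' = x^k A'$ (you carry out the structure-theorem computation explicitly, while the paper simply cites the counting identity), and both obtain the second equality by using regularity of $x^r$ together with $\fm^{r+k}=x^{r+k}A'$ to transport the filtration by multiplication by $x^r$.
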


\begin{proof}
The first equality follows from the definition of the $\beta_i$'s,
the equalities $\fm^iA'=x^iA'$ for all $i$ (Lemma \ref{A'} $(5)$)
and from the fact that
$$\# \{j; a_j=i\} =\la((A+x^{i-1}A')/(A+x^i
A'))-\la((A+x^{i}A')/(A+x^{i+1}A')).$$

For the second equality one uses that $x$ is a regular element of
$A'$ and that $\fm^i=\fm^iA'=x^iA'$ for $i\geq r$, by Lemma
\ref{A'} $(6)$. Thus,
$$\begin{array}{l}(A+\fm^iA')/(A+\fm^{i+1}A')\cong

x^r(A+\fm^iA')/x^r(A+\fm^{i+1}A')=\\
(x^rA+\fm^{r+i}A')/(A+\fm^{r+i+1}A')=
(x^rA+\fm^{r+i})/(x^rA+\fm^{r+i+1}).\end{array}$$
\end{proof}

Observe that, as a consequence of this lemma, the above
decomposition of $A'/A$ can also be written as $$ \frac{A'}{A}
\cong \bigoplus_{i=1}^{r}
\left(\frac{W}{x^{i}W}\right)^{\beta_i}.$$

\subsection{Apery invariants}

Let $xA$ be a minimal reduction of $\fm$ and  $\overline{\fm} :=
\fm/xA$ be the maximal ideal of $A/xA$.

We define the \textit{Apery} invariants of $A$ with respect to $x$
as the set of integers
\[
\gamma_i=\dim_k \left(
\frac{\overline{m}^i}{\overline{m}^{i+1}}\right)=\la \left(
\frac{\fm^i+xA }{\fm^{i+1}+xA} \right).
\]
for $i\leq r$. That is, the values of the Hilbert-Samuel function
of the $0$-dimensional local ring $A/xA$.

\begin{lem}
\label{sum-gamma} The following equalities hold:
\begin{enumerate}
\item $\displaystyle{\sum_{i=1}^r} \gamma_i=e-1,$

\item $\displaystyle{\sum_{i=1}^r }i\gamma_i = \rho-
\displaystyle{\sum_{i=1}^{r-1} } \la(\fm^{i+1}\cap xA/x\fm^{i}) .$
\end{enumerate}
\end{lem}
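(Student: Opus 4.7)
The plan is to compute each sum by turning it into a telescoping/collapsing sum of length function values, then to identify the resulting expression with quantities already computed in the background subsection.

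For part (1), I would observe that $\sum_{i=1}^r \gamma_i$ is a telescoping sum of lengths of successive quotients of the chain
\[
\fm+xA\supseteq \fm^2+xA\supseteq\cdots\supseteq \fm^{r+1}+xA,
\]
so it equals $\lambda\bigl((\fm+xA)/(\fm^{r+1}+xA)\bigr)$. Since $xA\subseteq \fm$, the top term is just $\fm$, and since $\fm^{r+1}=x\fm^r\subseteq xA$, the bottom term is $xA$. Hence the sum equals $\lambda(\fm/xA)=\lambda(A/xA)-1=e-1$ by the Sally equality $\lambda(A/xA)=e$ quoted in Section 2.

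For part (2), I would rewrite $\sum_{i=1}^r i\gamma_i=\sum_{k=1}^r\sum_{i=k}^r\gamma_i$. By the telescoping argument used in (1), the inner sum equals $\lambda((\fm^k+xA)/(\fm^{r+1}+xA))=\lambda((\fm^k+xA)/xA)=\lambda(\fm^k/(\fm^k\cap xA))$. Thus
\[
\sum_{i=1}^r i\gamma_i=\sum_{k=1}^r\lambda\bigl(\fm^k/(\fm^k\cap xA)\bigr).
\]
To evaluate each term, I would use the short exact sequence
\[
0\to (\fm^k\cap xA)/x\fm^{k-1}\to \fm^k/x\fm^{k-1}\to \fm^k/(\fm^k\cap xA)\to 0
\]
(valid since $x\fm^{k-1}\subseteq\fm^k\cap xA$) together with the identity $\lambda(\fm^k/x\fm^{k-1})=e-\mu(\fm^{k-1})$ derived in Section 2 from $\lambda(\fm^{k-1}/x\fm^{k-1})=e$ and $\mu(\fm^{k-1})=\lambda(\fm^{k-1}/\fm^k)$ (with the convention $\mu(\fm^0)=1$, which is consistent for $k=1$).

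Putting these together, the sum becomes
\[
\sum_{k=1}^r\bigl(e-\mu(\fm^{k-1})\bigr)-\sum_{k=1}^r\lambda\bigl((\fm^k\cap xA)/x\fm^{k-1}\bigr)=re-\bigl(1+\mu(\fm)+\cdots+\mu(\fm^{r-1})\bigr)-\sum_{k=1}^r\lambda\bigl((\fm^k\cap xA)/x\fm^{k-1}\bigr).
\]
By the formula $\rho=r\mu(\fm^r)-\bigl(1+\mu(\fm)+\cdots+\mu(\fm^{r-1})\bigr)$ established at the end of Section 2.1, together with $\mu(\fm^r)=e$, the first two terms combine into $\rho$. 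Finally, the $k=1$ term of the remaining sum vanishes because $\fm\cap xA=xA=x\fm^0$, so reindexing $i=k-1$ yields $\sum_{i=1}^{r-1}\lambda(\fm^{i+1}\cap xA/x\fm^i)$, giving the desired formula.

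The only delicate point in this plan is the bookkeeping for the $k=1$ endpoint of the telescoping sum and the compatibility of the convention $\mu(\fm^0)=1$ with the exact sequence; everything else is a straightforward rearrangement combined with the length identities already in place.
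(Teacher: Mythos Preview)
Your proof is correct and follows essentially the same approach as the paper: both arguments telescope the $\gamma_i$ along the chain $\fm^i+xA$, reduce to the lengths $\lambda(\fm^k/(\fm^k\cap xA))$, and then use the short exact sequence with $x\fm^{k-1}$ together with the identities for $e$ and $\rho$ from Section~2.1. Your bookkeeping is in fact slightly more streamlined than the paper's, which reaches the same endpoint through the intermediate quantities $\lambda(A/(\fm^i+xA))$ and the identity $e-1=\rho-\sum_{i=1}^{r-1}\lambda(\fm^{i+1}/x\fm^i)$ before applying the final exact sequence.
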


\begin{proof}
By considering the exact sequences
$$ 0\longrightarrow (\fm^{i}+xA)/(\fm^{i+1}+xA) \longrightarrow  A/(\fm^{i+1}+xA)\longrightarrow
A/(\fm^{i}+xA)\longrightarrow 0$$ for $1\leq i \leq r$, and taking
lengths, the equality $ \sum_{i=1}^r
\gamma_i=\la(A/(\fm^{r+1}+xA))-\la(A/(\fm
+xA))=\la(A/xA)-\la(A/\fm )=e-1$ is deduced.

By using the above exact sequence also it is easily deduced that
$\sum_{i=1}^r i\gamma_i=re- \sum_{i=1}^r \la(A/\fm^i
+xA)=e-1+(r-1)e- \sum_{i=1}^{r-1} \la(A/\fm^{i+1} +xA)$. Then,
$\sum_{i=1}^r i\gamma_i =e-1+\sum_{i=1}^{r-1}
\la(\fm^{i+1}/\fm^{i+1}\cap xA) $
 follows by taking lengths in the exact sequences
$$ 0\longrightarrow \fm^{i+1}/(\fm^{i+1}\cap xA) \longrightarrow  A/xA\longrightarrow
A/(\fm^{i+1}+xA)\longrightarrow 0,$$ for $1\leq i \leq r-1$. Now,
the equality $e-1=\rho-\sum_{i=1}^{r-1} \la(\fm^{i+1}/x\fm^i)$,
gives $\sum_{i=1}^r i\gamma_i =\rho-\sum_{i=1}^{r-1}
\la(\fm^{i+1}/x\fm^i)+\sum_{i=1}^{r-1} \la(\fm^{i+1}/\fm^{i+1}\cap
xA)$. Finally, the exact sequences
$$ 0\longrightarrow \fm^{i}\cap xA/x\fm^{i} \longrightarrow  \fm^{i+1}/x\fm^i\longrightarrow
\fm^{i+1}/(\fm^i\cap xA)\longrightarrow 0,$$ for $1\leq i \leq
r-1$ transform the last equality into the sentence (2).
\end{proof}

\begin{cor}
\label{comparing-sums} It holds:
$$\sum_{i=1}^r i \gamma_i \leq \sum_{i=1}^r i\beta_i \leq \sum_{i=1}^r i\alpha_i,$$
and any (all) of the equalities occurs if and only if $G(\fm)$ is
Cohen-Macaulay.
\end{cor}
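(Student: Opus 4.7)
The plan is to obtain both inequalities by directly subtracting the three sum formulas established in Lemmas \ref{sum-alpha}, \ref{sum-beta} and \ref{sum-gamma}, and then to interpret the two non-negative differences that appear through the characterizations of Cohen-Macaulayness of $G(\fm)$ recalled in Lemma \ref{CM} (and its immediate Corollary \ref{sum-alpha-rho}).

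For the inequality $\sum_{i=1}^r i\beta_i \leq \sum_{i=1}^r i\alpha_i$ I would simply combine the equalities of Lemmas \ref{sum-alpha} and \ref{sum-beta} to get
$$\sum_{i=1}^r i\alpha_i - \sum_{i=1}^r i\beta_i = \la(T(G(\fm))) \geq 0,$$
with equality if and only if the $F(x)$-torsion of $G(\fm)$ vanishes. By the analysis of $T(G(\fm))$ given before Lemma \ref{CM} (and used in Corollary \ref{sum-alpha-rho}), this is exactly the Cohen-Macaulayness of $G(\fm)$.

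For the inequality $\sum_{i=1}^r i\gamma_i \leq \sum_{i=1}^r i\beta_i$ I would subtract the formulas of Lemmas \ref{sum-beta} and \ref{sum-gamma} to obtain
$$\sum_{i=1}^r i\beta_i - \sum_{i=1}^r i\gamma_i = \sum_{i=1}^{r-1} \la(\fm^{i+1}\cap xA/x\fm^i) \geq 0,$$
and equality here amounts to $\fm^{i+1}\cap xA = x\fm^i$ for $1\leq i \leq r-1$. The remaining cases are automatic: for $i=0$ trivially $xA \subseteq \fm$, and for $i \geq r$ the reduction property $\fm^{i+1}=x\fm^i$ forces $\fm^{i+1}\cap xA = x\fm^i$. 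Thus the equality condition is equivalent to $\fm^n \cap xA = x\fm^{n-1}$ for all $n$, which is condition (3) of Lemma \ref{CM} for $i=1$, hence equivalent to $G(\fm)$ being Cohen-Macaulay.

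Putting both equivalences together yields the final claim that any one of the inequalities is an equality precisely when $G(\fm)$ is Cohen-Macaulay. There is no real obstacle here: the proof is a straightforward bookkeeping exercise once the three sum formulas of the preceding lemmas are in place, and the only point requiring a moment's care is the reduction of the equality case of the $\beta$-$\gamma$ inequality to the (seemingly stronger) condition (3) of Lemma \ref{CM}, which however follows immediately from the trivial ranges $i=0$ and $i\geq r$.
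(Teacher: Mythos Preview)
Your proposal is correct and follows essentially the same route as the paper's own proof: the paper simply cites Lemmas \ref{sum-alpha}, \ref{sum-beta}, \ref{sum-gamma} for the inequalities and then invokes the Valabrega--Valla condition (Lemma \ref{CM}(3)) together with the vanishing of $T(G(\fm))$ for the equality cases. Your write-up merely makes explicit the two non-negative differences $\la(T(G(\fm)))$ and $\sum_{i=1}^{r-1}\la(\fm^{i+1}\cap xA/x\fm^i)$ and spells out the elementary reduction of the second equality condition to Lemma \ref{CM}(3) with $i=1$, which the paper leaves to the reader.
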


\begin{proof} Lemma \ref{sum-alpha}, Lemma \ref{sum-beta} and
Lemma \ref{sum-gamma} give the inequalities in the corollary. Also,
these lemmas and the characterization of the Cohen-Macaulay
property of the tangent cone of $A$ in terms of the
Valabrega-Valla conditions (reflected in Lemma \ref{CM}) and by
the vanishing of the torsion module $T(G(\fm))$ complete the
proof.
\end{proof}

Assume that $A$ is a complete equicharacteristic, residually
rational local domain of multiplicity $e$; that is, $A$ is a
subring of the formal power series ring $k[[t]]$ with conductor
$(A:k[[t]])\neq 0$.  Let us denote by $v$ the $t$-adic valuation.

We consider the value semigroup $S:= v(A)=\{ v(a): 0\neq a\in
A\}$. Then $x$ is an element of smallest positive value $e$. We
denote by $\Ap(S)=\{w_0=0, w_1, \dots, w_{e-1}\}$, the Apery set
of $S$ with respect to $e$; that is, the set of the smallest
elements in $S$ in each congruence class module $e$.

We call a subset $\{g_0=1,g_1\dots ,g_{e-1}\}$ of elements of $A$
an Apery basis with respect to $x$ if the following conditions are
satisfied for each $j$, $1\leq j \leq e-1$:
\begin{enumerate}
\item $v(g_j)=w_j$,
 \item $\max \{i \mid g_j \in \fm^i+xA \}=\max \{i \mid w_j \in v(\fm^i+xA) \}$.
\end{enumerate}

We shall denote by $c_j := \max \{i \mid g_j \in \fm^i+xA \}$. Observe that $c_j\leq r$. The following observation justifies why
we call these invariants, the Apery invariants.

\begin{lem}
\label{gamma} For $i\geq 1$, $ \gamma_i=\# \{ j; c_j=i \} .$
\begin{proof} Let  $\Ap(S)=\{w_0, w_1, \dots, w_{e-1}\}$, the Apery set
of $S$ and $\{g_0,g_1\dots ,g_{e-1}\}$ be an Apery basis  of $A$
with respect to $x$.

 Fixed $i$, we consider $\fm^i+xA$. If
$i\leq c_j$ then $g_j \in \fm^i+xA$ and obviously $w_j \in
Ap(v(\fm^i +xA ))$. If $i > c_j$ then, by definition of $c_j$,  $
w_j\notin v(\fm^{i+1} +xA ))$ and, since $xg_j\in \fm^{i+1} +xA$
with $v(xg_j)=w_j+e$, we have that $w_j+e \in Ap(v(\fm^i +xA ))$.
So, applying Lemma 2.1 of \cite{BF}, $\fm^i +xA$ is a free
$k[[x]]$-module of rank $e$ with basis $x^{\epsilon_{i,j}}g_j$
with $\epsilon_{i,j}\in \{0,1\}$. Thus, $\la( (\fm^i +xA)/xA)=
\# \{ j ; c_j \geq i\} $ and $\gamma_i := \la ((\fm^i
+xA)/(\fm^{i+1}+xA)) =\# \{j; c_j=i\}$.

\end{proof}
\end{lem}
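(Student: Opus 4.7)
The plan is to compute $\gamma_i = \la((\fm^i + xA)/(\fm^{i+1} + xA))$ by describing each ideal $\fm^i+xA$ as a $k[[x]]$-module via its value semigroup, and then to read off the answer from a length computation. The main input will be Lemma~2.1 of \cite{BF}, which says that if we know the Apery set of $v(\fm^i+xA)$ with respect to $e=v(x)$, we obtain a free $k[[x]]$-basis of $\fm^i+xA$ built out of the Apery basis $\{g_0,\dots,g_{e-1}\}$.

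The key step is to identify, for fixed $i$ and each $j\in\{0,\dots,e-1\}$, the Apery element of $v(\fm^i+xA)$ in the residue class of $w_j$ modulo $e$. By the defining property of an Apery basis, $c_j=\max\{i : w_j\in v(\fm^i+xA)\}$. Hence if $c_j\geq i$, then $w_j\in v(\fm^i+xA)$, and since $w_j$ is already the smallest element of $S$ in its residue class, it is the Apery representative. If instead $c_j<i$, then $w_j\notin v(\fm^i+xA)$, while $xg_j\in xA\subseteq \fm^i+xA$ has value $w_j+e$. Any element of $v(\fm^i+xA)$ congruent to $w_j\pmod e$ and strictly smaller than $w_j+e$ would have value at most $w_j$, hence equal to $w_j$ by minimality in $S$, contradicting $w_j\notin v(\fm^i+xA)$; thus $w_j+e$ is the Apery representative in this case.

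Applying Lemma~2.1 of \cite{BF} to the semigroup $v(\fm^i+xA)$ now yields that $\fm^i+xA$ is a free $k[[x]]$-module of rank $e$ on the basis $\{x^{\epsilon_{i,j}}g_j\}_{0\le j\le e-1}$, with $\epsilon_{i,j}=0$ exactly when $c_j\ge i$. Therefore
\[
\la((\fm^i+xA)/xA)=\#\{j : c_j\ge i\},
\]
and subtracting the analogous expression for $i+1$ gives $\gamma_i=\#\{j : c_j=i\}$. The main obstacle is the second case above, namely establishing that for $c_j<i$ the Apery representative is exactly $w_j+e$ and correctly importing the Barucci--Fr\"oberg structure lemma; the remaining computation is just a telescoping of lengths.
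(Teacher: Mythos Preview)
Your proof is correct and follows essentially the same approach as the paper: identify the Apery set of $v(\fm^i+xA)$ as $\{w_j+\epsilon_{i,j}e\}$ with $\epsilon_{i,j}\in\{0,1\}$ according to whether $c_j\ge i$, invoke Lemma~2.1 of \cite{BF} to get the free $k[[x]]$-basis $\{x^{\epsilon_{i,j}}g_j\}$, and then compute $\lambda((\fm^i+xA)/xA)=\#\{j:c_j\ge i\}$ and telescope. If anything, your justification of the second case (that the Apery representative is exactly $w_j+e$ when $c_j<i$) is spelled out more carefully than in the paper.
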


We call  a subset $\{f_0=1,f_1\dots ,f_{e-1}\}$ of elements of $A$
a BF-Apery basis if the following conditions are satisfied for
each $j$, $1\leq j \leq e-1$:
\begin{enumerate}
\item $v(f_j)=w_j$,
 \item [(2')] $\max \{i \mid f_j \in \fm^i \}=\max \{i \mid w_j \in v(\fm^i)\}$.
\end{enumerate}
We shall denote by $b_j := \max \{i \mid f_j \in \fm^i \}$ and we say that $A$
satisfies the $BF$ condition with respect to
$x$ if $m^i$, for all $i\geq 0$, is generated freely by elements
of type $x^{h_{i,j}} f_j$, $0\leq j \leq e-1$, for some exponents
$h_{i,j}$.

Note that BF-Apery basis are called Apery basis by Barucci and Fr\"{o}berg in \cite{BF}. In general, as shown by Lance Bryant in his
Ph. Dissertation \cite{B}, the BF condition is not always satisfied. It is
easy to see that under the BF condition with respect to $x$, then $\gamma_i=\#
 \{j;
b_j=i\}$.

\section{Comparing invariants}
 Let $(A,\fm)$ be an one dimensional Cohen-Macaulay local ring
with infinite residue field, embedding dimension $b$, reduction
number $r$ and multiplicity $e$. Let $(x)=xA$ be a minimal
reduction of $\fm$. In this section we will compare the sets of
numbers introduced in the above section; that is
\begin{itemize}
\item $\{ \alpha_i,\, \alpha_{i,j}\}$ the invariants of the
tangent con $G(\fm)$ with respect to $x$.

\item $\{ \beta_1, \dots , \beta_r\}$ the micro-invariants of $A$.

\item $\{\gamma_1,\dots, \gamma_r\}$ the Apery invariants of $A$
with respect to $x$.
\end{itemize}

\subsection{Micro-invariants of the ring and invariants of its
tangent cone}
 Our first purpose is
to measure the difference between $\beta_i$ and $\alpha_i$ also in
terms of lengths of colon ideals. For this, we will begin by
writing the $\beta_i$'s in terms of lengths of specific colon ideals.

\begin{lem}
\label{2beta} For $1\leq i \leq r-1$, it holds
$$\beta_i=
\la((\fm^r:x^{r-i})/(\fm^r:x^{r-i-1}))
                              -\la((\fm^r:x^{r-i+1})/(\fm^r:x^{r-i})),$$
and $\beta_r=\mu(\fm^r)-\la((\fm^r :x)/\fm^r)$.
\end{lem}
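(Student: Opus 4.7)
My plan is to rewrite each of the two lengths appearing in the definition of $\beta_i$ in terms of the colon chain $(\fm^r:x^{r-i-1})\subseteq(\fm^r:x^{r-i})\subseteq(\fm^r:x^{r-i+1})$, after which the claimed identity becomes a telescoping cancellation. The central input is the identification
\[
A\cap x^iA' \;=\; (\fm^r:x^{r-i}),\qquad 0\le i\le r.
\]
One direction: if $a=x^ia'$ with $a'\in A'$, then $ax^{r-i}=x^ra'\in x^rA'=\fm^r$ by Lemma~\ref{A'}(6). For the converse, given $a\in A$ with $ax^{r-i}\in\fm^r=x^rA'$, I use the regularity of $x$ on $A'$ (Lemma~\ref{A'}(4)) to cancel $x^{r-i}$ and conclude $a\in x^iA'$.

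Combining this with $\fm^iA'=x^iA'$ (Lemma~\ref{A'}(5)) and $\la(A'/x^iA')=ie$ (Lemma~\ref{A'}(5) and (8)), the second isomorphism theorem gives, for every $0\le i\le r$,
\[
\la\bigl(A'/(A+x^iA')\bigr)=ie-\la\bigl(A/(\fm^r:x^{r-i})\bigr).
\]
Subtracting two consecutive instances will yield, for $1\le i\le r$,
\[
\la\bigl((A+x^{i-1}A')/(A+x^iA')\bigr) = e - \la\bigl((\fm^r:x^{r-i+1})/(\fm^r:x^{r-i})\bigr).
\]

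For $1\le i\le r-1$ both $i$ and $i+1$ lie in the admissible range, so plugging this displayed formula into the definition $\beta_i=\la((A+\fm^{i-1}A')/(A+\fm^iA'))-\la((A+\fm^iA')/(A+\fm^{i+1}A'))$ cancels the two copies of $e$ and leaves exactly the colon expression the lemma asserts. For $i=r$, the second piece of $\beta_r$ vanishes because $\fm^n\subseteq A$ for $n\ge r$ (Lemma~\ref{A'}(6)) forces $A+x^rA'=A+x^{r+1}A'=A$. What remains is $\beta_r=\la(x^{r-1}A'/(\fm^r:x))$ via the colon identification at $i=r-1$, and the length count $\la(x^{r-1}A'/\fm^r)=\la(A'/xA')=e=\mu(\fm^r)$, together with $\fm^r\subseteq(\fm^r:x)\subseteq x^{r-1}A'$, delivers $\beta_r=\mu(\fm^r)-\la((\fm^r:x)/\fm^r)$.

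The only genuinely nontrivial step is the colon identification $A\cap x^iA'=(\fm^r:x^{r-i})$; once that is in hand, the rest is pure bookkeeping with lengths along the chain $x^iA'\subseteq A+x^iA'\subseteq A'$.
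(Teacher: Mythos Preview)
Your argument is correct, and it takes a somewhat different route from the paper's. The paper first invokes Lemma~\ref{beta} to rewrite $\beta_i$ via the quotients $(x^rA+\fm^{r+i})/(x^rA+\fm^{r+i+1})$, then uses an exact sequence together with the identity $(x^r)\cap\fm^{r+i}=x^r(\fm^r:x^{r-i})$ inside $A$. You instead work directly from the defining quotients $(A+\fm^{i-1}A')/(A+\fm^iA')$ and use the identity $A\cap x^iA'=(\fm^r:x^{r-i})$, which lives in $A'$; the two key identities are in fact equivalent (multiply yours by $x^r$ and use $\fm^{r+i}=x^{r+i}A'$), but your path avoids the detour through Lemma~\ref{beta} and is therefore a bit more direct. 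The paper's approach has the minor advantage that its computation stays inside $A$ once Lemma~\ref{beta} is quoted, while yours makes transparent why the first neighborhood ring controls these colon ideals. One cosmetic point: when you write ``$\fm^n\subseteq A$ for $n\ge r$'' you mean $x^nA'=\fm^n\subseteq A$; the inclusion $\fm^n\subseteq A$ is of course trivial.
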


\begin{proof}
By Lemma \ref{beta} we have that
$$\beta_i=\la(((x^r)+\fm^{r+i-1})/((x^r)+\fm^{r+i}))-
\la((x^r)+\fm^{r+i})/((x^r)+\fm^{r+i+1})).$$ Now, by considering
the exact sequence
$$0\rightarrow (x^r)\cap\fm^{r+i}/(x^r)\cap\fm^{r+i+1}
\rightarrow \fm^{r+i}/\fm^{r+i+1}
\rightarrow\fm^{r+i}/((x^r)\cap\fm^{r+i}+\fm^{r+i+1})\longrightarrow
0$$ and the isomorphisms
$$ \begin{array}{rl}((x^r)+\fm^{r+i})/((x^r)+\fm^{r+i+1})&\cong
\fm^{r+i}/(\fm^{r+i}\cap(x^r)+\fm^{r+i+1}),\\
\fm^{r+i}/\fm^{r+i+1}&\cong \fm^{r}/\fm^{r+1} \end{array}$$  we
obtain the equality
$$
\la(((x^r)+\fm^{r+i})/((x^r)+\fm^{r+i+1}))=\mu(\fm^r)-\la((x^r)\cap\fm^{r+i}/((x^r)\cap\fm^{r+i+1})).$$
Also, one can easily prove that $(x^r)\cap \fm^{r+i}=(x^r)\cap
x^i\fm^r=x^r(\fm^r:x^{r-i})$. From these considerations it may be
deduced that
$$\beta_i=\la((\fm^r:x^{r-i})/(\fm^r:x^{r-i-1}))
                              -\la((\fm^r:x^{r-i+1})/(\fm^r:x^{r-i}))$$ for $ 1\leq i \leq r-1$ and
that $ \beta_r=\mu(\fm^r)-\la((\fm^r :x)/\fm^r).$
\end{proof}

In the next proposition we express the difference between the
value of the micro-invariant and the invariant for an specific $i$
in terms of lengths of colon ideals.

\begin{prop}
\label{beta-alpha}
 For $1\leq i \leq r$ it holds
$$\beta_i +\la((\fm^r:x^{r-i+1})/(\fm^{i-1}+(\fm^r:x^{r-i})))=
\alpha_i +\la((\fm^r:x^{r-i})/(\fm^{i}+(\fm^r:x^{r-i-1}))).$$
\end{prop}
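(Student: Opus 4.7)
Set $J_k:=(\fm^r:x^k)$, so $J_0=\fm^r\subseteq J_1\subseteq\cdots\subseteq J_r=A$ is an ascending chain. Since $x^{r-i}\fm^i\subseteq\fm^r$ one has $\fm^i\subseteq J_{r-i}$ for $0\le i\le r$, and likewise $\fm^{i-1}\subseteq J_{r-i+1}$, so the two lengths appearing in the statement are finite lengths of honest quotients of submodules of $A$. The plan is to write both $\beta_i$ and $\alpha_i$ as parallel differences adapted to the filtration $\{J_k\}$ and then read off the identity by length-additivity arguments.

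Lemma~\ref{2beta} already gives $\beta_i=\la(J_{r-i}/J_{r-i-1})-\la(J_{r-i+1}/J_{r-i})$ for $1\le i\le r-1$. To bring $\alpha_i$ into the same form I start from the third line of Lemma~\ref{alpha}(1), $\alpha_i=\mu(\fm^i)-f_{i,r-i}-\mu(\fm^{i-1})+f_{i-1,r-i+1}$. Because $\fm^{r+1}=x\fm^r$ and $x$ is regular, $(\fm^{r+1}:x^j)=J_{j-1}$ for $1\le j\le r$, so $f_{i,r-i}=\la((\fm^i\cap J_{r-i-1})/\fm^{i+1})$ and $f_{i-1,r-i+1}=\la((\fm^{i-1}\cap J_{r-i})/\fm^i)$. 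Combining these with $\mu(\fm^i)=\la(\fm^i/\fm^{i+1})$ and the canonical isomorphism $M/(M\cap N)\cong (M+N)/N$ one obtains
$$\alpha_i=\la((\fm^i+J_{r-i-1})/J_{r-i-1})-\la((\fm^{i-1}+J_{r-i})/J_{r-i}).$$

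With these parallel forms in hand, subtracting $\alpha_i$ from $\beta_i$ and using the short exact sequence
$$0\to (\fm^i+J_{r-i-1})/J_{r-i-1}\to J_{r-i}/J_{r-i-1}\to J_{r-i}/(\fm^i+J_{r-i-1})\to 0$$
together with its analogue one level up yields
$$\beta_i-\alpha_i=\la(J_{r-i}/(\fm^i+J_{r-i-1}))-\la(J_{r-i+1}/(\fm^{i-1}+J_{r-i})),$$
which is the claimed equality after rearrangement. For $i=r$ the right-hand length vanishes under the natural convention $(\fm^r:x^{-1}):=\fm^{r+1}$; substituting $\beta_r=\mu(\fm^r)-\la(J_1/\fm^r)$ from Lemma~\ref{2beta} and $\alpha_r=\mu(\fm^r)-\mu(\fm^{r-1})$ from Lemma~\ref{alpha} reduces the identity to $\la(J_1/\fm^r)-\la(J_1/\fm^{r-1})=\mu(\fm^{r-1})$, which is just additivity of length along $\fm^r\subseteq\fm^{r-1}\subseteq J_1$. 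The main obstacle is the reformulation of $\alpha_i$ in the middle paragraph; once both invariants sit in the same ``$(\fm^\bullet+J_\bullet)/J_\bullet$''-format the proposition is a bookkeeping exercise in length additivity.
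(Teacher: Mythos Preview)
Your proof is correct and follows essentially the same approach as the paper's own argument. Both proofs use Lemma~\ref{2beta} for $\beta_i$, the formula $\alpha_i=\mu(\fm^i)-f_{i,r-i}-\mu(\fm^{i-1})+f_{i-1,r-i+1}$ from Lemma~\ref{alpha}, and the same pair of exact sequences relating $\fm^i$, $J_{r-i-1}=(\fm^r:x^{r-i-1})$ and $J_{r-i}=(\fm^r:x^{r-i})$ to conclude; your recasting of $\alpha_i$ in the form $\la((\fm^i+J_{r-i-1})/J_{r-i-1})-\la((\fm^{i-1}+J_{r-i})/J_{r-i})$ is just a slightly cleaner packaging of the identical length computation, and the $i=r$ case is handled the same way.
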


\begin{proof} For $1\leq i \leq r-1$, consider the exact sequences
$$
\begin{array}{ll}
0  &\rightarrow \fm^i/(\fm^i\cap(\fm^r:x^{r-i-1})) \rightarrow
(\fm^r:x^{r-i})/(\fm^r:x^{r-i-1})\\
  &\rightarrow
(\fm^r:x^{r-i})/(m^i+(\fm^r:x^{r-i-1}))\rightarrow 0
 \end{array}
$$
and
$$0\rightarrow (\fm^i\cap (\fm^r:x^{r-i-1}))/(\fm^{i+1})
\rightarrow \fm^i/\fm^{i+1} \rightarrow  \fm^i/(m^i\cap
(\fm^r:x^{r-i-1}))\rightarrow 0\,.$$ Taking lengths we get
$$\la((\fm^r:x^{r-i})/(\fm^r:x^{r-i-1}))=\la((\fm^r:x^{r-i})/(m^i+(\fm^r:x^{r-i-1})))+\mu
(\fm^i)-f_{i,r-i}.$$

Hence, by Lemma \ref{beta}, Lemma \ref{alpha} and the the above
lemma we get, for $1\leq i \leq r-1$, that
$$\beta_i - \alpha_i=
\la((\fm^r:x^{r-i})/(\fm^{i}+(\fm^r:x^{r-i-1})))
-\la((\fm^r:x^{r-i+1})/(\fm^{i-1}+(\fm^r:x^{r-i}))),$$ and
$\alpha_r
-\beta_r=\la((\fm^r:x)/\fm^r)-\mu(\fm^{r-1})=\la((\fm^r:x)/\fm^{r-1}).$
\end{proof}

\subsection{Apery invariants of the ring and invariants of its
tangent cone}

Put $G:=G(\fm)$, $F:=F(x)$ and $\overline{\fm}:=\fm/xA \subseteq
A/xA$.

\begin{prop}
\label{alpha-gamma} For $1\leq i \leq r$ it holds
$$\alpha_i + \sum_{j=1}^{r-i-1} \alpha_{i,j}= \gamma_i + \la((\fm^{i}\cap xA
+\fm^{i+1})/(x\fm^{i-1}+\fm^{i+1})).$$
\end{prop}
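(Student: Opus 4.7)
The plan is to show that both sides equal $\la\bigl(\fm^i/(\fm^{i+1}+x\fm^{i-1})\bigr)$, which I identify as the length of the degree-$i$ component of the fiber cone $G(\fm)/x^\ast G(\fm)$. The right-hand side then arises by filtering this quotient through the intermediate submodule $(\fm^i\cap xA+\fm^{i+1})/(x\fm^{i-1}+\fm^{i+1})$.

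First, I compute the left-hand side via the fiber cone. Tensoring the decomposition
$$G(\fm)\cong \bigoplus_{i=0}^{r}\left(F(x)(-i)\right)^{\alpha_i}\bigoplus_{i=1}^{r-1}\bigoplus_{j=1}^{r-i}\!\left(\tfrac{F(x)}{(x^\ast)^jF(x)}(-i)\right)^{\!\alpha_{i,j}}$$
over $F=F(x)$ with $F/(x^\ast)F\cong A/\fm$ (concentrated in degree $0$), every summand contributes a single copy of $A/\fm$ at its shift. Hence the degree-$i$ component of $G(\fm)/x^\ast G(\fm)$ has length $\alpha_i+\sum_{j=1}^{r-i}\alpha_{i,j}$. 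By Remark~\ref{known}, $\alpha_{i,r-i}=0$, so this length is exactly $\alpha_i+\sum_{j=1}^{r-i-1}\alpha_{i,j}$, the LHS. On the module side, $x^\ast$ is represented by $x+\fm^2$, so multiplication by $x^\ast$ sends $\fm^{i-1}/\fm^i$ to $(x\fm^{i-1}+\fm^{i+1})/\fm^{i+1}$. Therefore
$$[G(\fm)/x^\ast G(\fm)]_i=\fm^i/(\fm^{i+1}+x\fm^{i-1}),$$
and LHS $=\la\bigl(\fm^i/(\fm^{i+1}+x\fm^{i-1})\bigr)$.

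Next, I rewrite $\gamma_i$ in the same ambient module $\fm^i$. By the second isomorphism theorem, $(\fm^i+xA)/(\fm^{i+1}+xA)\cong \fm^i/(\fm^i\cap(\fm^{i+1}+xA))$, and the modular law (valid because $\fm^{i+1}\subseteq\fm^i$) gives $\fm^i\cap(\fm^{i+1}+xA)=\fm^{i+1}+\fm^i\cap xA$. Hence $\gamma_i=\la\bigl(\fm^i/(\fm^i\cap xA+\fm^{i+1})\bigr)$. Since $x\fm^{i-1}\subseteq \fm^i\cap xA$, the submodules fit in the chain
$$x\fm^{i-1}+\fm^{i+1}\;\subseteq\;\fm^i\cap xA+\fm^{i+1}\;\subseteq\;\fm^i,$$
and additivity of length yields
$$\la\bigl(\fm^i/(x\fm^{i-1}+\fm^{i+1})\bigr)=\gamma_i+\la\bigl((\fm^i\cap xA+\fm^{i+1})/(x\fm^{i-1}+\fm^{i+1})\bigr),$$
which is precisely the RHS. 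Combining with the identification of the LHS from the first step completes the proof.

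The main obstacle is the identification of the LHS with $\la\bigl(\fm^i/(\fm^{i+1}+x\fm^{i-1})\bigr)$; everything else is a routine modular-law/length-additivity manipulation. This identification rests on two ingredients already available: the $F(x)$-module decomposition of $G(\fm)$ recalled in the Introduction and the vanishing $\alpha_{i,r-i}=0$ from Remark~\ref{known}, which is what converts the sum $\sum_{j=1}^{r-i}\alpha_{i,j}$ coming naturally from the tensor product into the truncated sum $\sum_{j=1}^{r-i-1}\alpha_{i,j}$ appearing in the statement. The edge cases $i=r$ (empty sum, $\fm^{r+1}\subseteq x\fm^{r-1}$) and $i=r-1$ cause no trouble once one uses $\alpha_{r-1,1}=0$.
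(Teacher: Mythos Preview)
Your proof is correct and follows essentially the same approach as the paper: both identify the degree-$i$ piece of $G(\fm)/x^\ast G(\fm)$ with $\fm^i/(x\fm^{i-1}+\fm^{i+1})$, compute its length from the $F(x)$-module decomposition of $G(\fm)$ (using $\alpha_{i,r-i}=0$), and then split this length via the intermediate submodule $(\fm^i\cap xA+\fm^{i+1})/(x\fm^{i-1}+\fm^{i+1})$ to obtain $\gamma_i$ plus the correction term. The paper packages the second step as a short exact sequence of graded modules and compares Hilbert series, whereas you work directly with a chain of submodules in the fixed degree~$i$; the content is the same.
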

\begin{proof}
With the notation just introduced, we have an exact sequence of
modules
$$0\longrightarrow V\longrightarrow G/x^\ast G \longrightarrow
G(\overline{\fm}) \longrightarrow 0,$$ where

$$\begin{array}{ll}
V &= \bigoplus_{n\geq 0}(\fm^{n}\cap xA +\fm^{n+1})/(x\fm^{n-1}+\fm^{n+1}),\\
G/x^{\ast}G & = \bigoplus_{n\geq 0}\fm^{n}/(x\fm^{n-1}+\fm^{n+1}) \textrm{ and },\\
G(\overline{\fm}) & = \bigoplus_{n\geq
0}(\fm^{n}+xA)/(\fm^{n+1}+xA).
\end{array}$$
Taking the corresponding Hilbert series (which are polynomials of degree up to $r$) we get
$$ H_{G/x^{\ast}G}(z)=H_V(z)+H_{G(\overline{\fm})}(z).$$

By the definition of the $\gamma_i$'s we have that
$H_{G(\overline{\fm})}(z)=\sum_{i=0}^r \gamma_i z^i$. On the other
hand,
$$G/x^{\ast}G\cong
\bigoplus_{i=0}^{r}\left(F/x^{\ast}F(-i)\right)^{\alpha_i}
\bigoplus_{i=1}^{r-1}\bigoplus_{j=1}^{r-i-1}\left(\frac{F}{x^\ast
F}(-i)\right)^{\alpha_{i,j}}
$$
and so $H_{G/x^{\ast}G}(z)=\sum_{i=0}^r(\alpha_i +
\sum_{j=1}^{r-i-1}\alpha_{i,j})z^i.$ Now, taking coefficients in
the above equality between Hilbert series we get the statement.
\end{proof}

\begin{cor} The following equalities hold
\begin{enumerate}
\item $\alpha_1 +\sum_{j=1}^{r-2} \alpha_{1,j}=\gamma_1=\mu (\fm
)-1. $,
 \item $\alpha_2 +\sum_{j=1}^{r-3} \alpha_{2,j}=\gamma_2=\mu (\fm^2)-\mu (\fm
 )+\alpha_{1,1}$.
\end{enumerate}

\end{cor}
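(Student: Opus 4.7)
The plan is to apply Proposition~\ref{alpha-gamma} in the cases $i=1$ and $i=2$ and verify that in each case the error term $\la\bigl((\fm^{i}\cap xA+\fm^{i+1})/(x\fm^{i-1}+\fm^{i+1})\bigr)$ vanishes, so that the left hand sides collapse to $\gamma_{1}$ and $\gamma_{2}$ respectively; then I would compute those two $\gamma$'s explicitly. The single algebraic input that makes both cases go through is the identity $\fm^{2}:x=\fm$. The inclusion $\fm\subseteq\fm^{2}:x$ is immediate, while the reverse uses superficiality of $x$: any $u\in(\fm^{2}:x)\setminus\fm$ would be a unit, forcing $x=(xu)u^{-1}\in\fm^{2}$, which contradicts the fact that a superficial element of degree one cannot lie in $\fm^{2}$.

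For $i=1$ the error term is zero for free: $xA\subseteq\fm$ gives $\fm\cap xA=xA=x\fm^{0}$. Hence Proposition~\ref{alpha-gamma} reduces to $\alpha_{1}+\sum_{j=1}^{r-2}\alpha_{1,j}=\gamma_{1}$, and it remains to observe
\[
\gamma_{1}=\la\bigl(\fm/(\fm^{2}+xA)\bigr)=\mu(\fm)-\la\bigl((xA+\fm^{2})/\fm^{2}\bigr)=\mu(\fm)-1,
\]
where the last equality uses that $x^{\ast}\in\fm/\fm^{2}$ is nonzero, again by superficiality.

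For $i=2$ the key identity gives $\fm^{2}\cap xA=x(\fm^{2}:x)=x\fm$, so the error term vanishes once more and Proposition~\ref{alpha-gamma} collapses to $\alpha_{2}+\sum_{j=1}^{r-3}\alpha_{2,j}=\gamma_{2}$. To evaluate $\gamma_{2}$ I would write $\gamma_{2}=\la\bigl(\fm^{2}/(\fm^{3}+\fm^{2}\cap xA)\bigr)=\mu(\fm^{2})-\la\bigl((\fm^{2}\cap xA)/(\fm^{3}\cap xA)\bigr)$, use the regularity of $x$ to obtain multiplication-by-$x$ isomorphisms yielding $(\fm^{2}\cap xA)/(\fm^{3}\cap xA)\cong(\fm^{2}:x)/(\fm^{3}:x)=\fm/(\fm^{3}:x)$, and then use the chain $\fm^{2}\subseteq\fm^{3}:x\subseteq\fm$ (where the first inclusion comes from $x\fm^{2}\subseteq\fm^{3}$ and the second from $\fm^{2}:x=\fm$) to conclude $\la(\fm/(\fm^{3}:x))=\mu(\fm)-\la\bigl((\fm\cap(\fm^{3}:x))/\fm^{2}\bigr)$. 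Lemma~\ref{alpha}(2) identifies this last length as $f_{1,1}=\alpha_{1,1}$, delivering $\gamma_{2}=\mu(\fm^{2})-\mu(\fm)+\alpha_{1,1}$.

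No serious obstacle appears; the whole argument is a reduction of Proposition~\ref{alpha-gamma} in small degrees via the superficiality identity $\fm^{2}:x=\fm$. The mildest subtlety is bookkeeping: distinguishing the two colon-ideal manipulations used, one to annihilate the correction term from Proposition~\ref{alpha-gamma} and another to convert $\gamma_{2}$ into the combinatorial quantity $f_{1,1}=\alpha_{1,1}$.
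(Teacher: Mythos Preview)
Your argument is correct and is exactly the computation the paper leaves implicit: the corollary is stated without proof immediately after Proposition~\ref{alpha-gamma}, and your derivation---killing the correction term $\nu_i$ via $\fm\cap xA=xA$ and $\fm^{2}\cap xA=x\fm$, then unwinding $\gamma_2$ through $f_{1,1}=\alpha_{1,1}$---is the intended route. Nothing is missing.
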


\subsection{Micro-invariants and Apery invariants of the ring}

For short we write
$$\nu_i:= \la((\fm^{i}\cap xA +\fm^{i+1})/(x\fm^{i-1}+\fm^{i+1}))$$
and
$$ g_i:=\la((\fm^r:x^{r-i})/(\fm^{i}+(\fm^r:x^{r-i-1}))).$$

Then, applying the previous results we obtain the following
relation between the micro-invariants of $A$ and the Apery
invariants of $A$ with respect to $x$:
\begin{cor}
\label{beta-gamma} For $1\leq i \leq r$ it holds
$$ \beta_i+\sum_{j=1}^{r-i-1}\alpha_{i,j}=\gamma_i+\nu_i+ g_i-g_{i-1}.$$
\end{cor}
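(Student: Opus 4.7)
The plan is to observe that this corollary is essentially a direct combination of the two preceding propositions, requiring only that we recognize the colon-ideal lengths appearing in Proposition \ref{beta-alpha} as precisely the quantities $g_i$ and $g_{i-1}$ defined just above.

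First I would rewrite Proposition \ref{beta-alpha} in the streamlined notation. Since $g_i = \la((\fm^r:x^{r-i})/(\fm^{i}+(\fm^r:x^{r-i-1})))$, the length appearing on the right-hand side of Proposition \ref{beta-alpha} is exactly $g_i$, and the length on the left-hand side, namely $\la((\fm^r:x^{r-i+1})/(\fm^{i-1}+(\fm^r:x^{r-i})))$, is precisely $g_{i-1}$ (shift $i\mapsto i-1$ in the definition of $g$). Hence Proposition \ref{beta-alpha} reads simply
\[
\beta_i - \alpha_i = g_i - g_{i-1}.
\]
Next, Proposition \ref{alpha-gamma} gives
\[
\alpha_i + \sum_{j=1}^{r-i-1}\alpha_{i,j} = \gamma_i + \nu_i,
\]
by the very definition of $\nu_i$. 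Adding these two identities cancels $\alpha_i$ and yields the claimed formula.

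The only genuine subtlety is bookkeeping at the boundary values $i=1$ and $i=r$: one must verify that the natural interpretations $g_0 = 0$ and $g_r = 0$ are consistent with the statements of both propositions. For $g_0$ one has $(\fm^r:x^r)=A$ and $\fm^0+(\fm^r:x^{r-1})=A$, so the length vanishes; for $g_r$ the quotient $(\fm^r:1)/(\fm^r+(\fm^r:x^{-1}))$ is interpreted as $\fm^r/\fm^r = 0$, matching the separate formula for $\beta_r$ and $\alpha_r$ given in the proofs of Lemma \ref{2beta} and Proposition \ref{beta-alpha}. I expect this boundary check to be the only non-automatic step; the core algebraic content is entirely contained in the two earlier propositions, so no new colon-ideal manipulations are required.
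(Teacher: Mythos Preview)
Your proposal is correct and matches the paper's approach exactly: the paper gives no explicit proof at all, merely stating that the corollary follows ``applying the previous results,'' which is precisely your combination of Proposition~\ref{beta-alpha} (rewritten as $\beta_i-\alpha_i=g_i-g_{i-1}$) with Proposition~\ref{alpha-gamma}. Your additional boundary discussion for $g_0$ and $g_r$ is a welcome clarification that the paper omits.
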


\section{Cohen-Macaulay tangent cone}
 Let $(A,\fm)$ be an one dimensional Cohen-Macaulay local ring
with infinite residue field $K$, embedding dimension $b$,
reduction number $r$ and multiplicity $e$. Let $(x)$ be a minimal
reduction of $\fm$.

Let

\begin{itemize}
\item $\{ \alpha_i,\, \alpha_{i,j}\}$ the invariants of the
tangent con $G(\fm)$ with respect to $F(x)$.

\item $\{ \beta_1, \dots , \beta_r\}$ the micro-invariants of $A$.

\item $\{\gamma_1,\dots, \gamma_r\}$ the Apery invariants of $A$
with respect to $x$.
\end{itemize}

and, for short, we will write

$$\begin{array}{l}
f_i:=\la((\fm^i\cap (\fm^{r}:x^{r-i-1}))/\fm^{i+1})\\
g_i:=\la( (\fm^{r}:x^{r-i})/(\fm^{i}+(\fm^r:x^{r-i-1})))\\
\nu_i:=\la((\fm^{i}\cap xA +\fm^{i+1})/(x\fm^{i-1}+\fm^{i+1}))
\end{array}
$$

\begin{thm}
\label{GCM} Assume that the tangent cone of $A$ is Cohen-Macaulay,
then for $1\leq i \leq r$ it holds
$$0<\alpha_i=\beta_i=\gamma_i=\mu(\fm^{i})-\mu(\fm^{i-1}).$$
\end{thm}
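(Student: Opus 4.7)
The plan is to convert each of the three desired equalities into a statement about vanishing of correction terms, and then read off the common value from \cite[Lemma \ref{alpha}]{CZ}. Everything is powered by the equivalent characterizations of Cohen--Macaulayness of $G(\fm)$ in Lemma \ref{CM}, in particular $(\fm^{n}:x^{i})=\fm^{n-i}$ and $\fm^{n}\cap x^{i}A=x^{i}\fm^{n-i}$ for all $n$ and all $i\geq 1$.

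First I would eliminate the extra invariants $\alpha_{i,j}$. Applied to the definition $f_{i,j}=\la(\fm^{i}\cap(\fm^{i+j+1}:x^{j})/\fm^{i+1})$, the colon condition of Lemma \ref{CM}(4) reduces $(\fm^{i+j+1}:x^{j})$ to $\fm^{i+1}$, so $f_{i,j}=0$ for every admissible pair. Since Lemma \ref{alpha}(2) writes each $f_{k,l}$ as a sum of non-negative terms $\alpha_{i,j}$, all $\alpha_{i,j}$ vanish. Substituting $f_{i,r-i}=f_{i-1,r-i+1}=0$ into the expression in Lemma \ref{alpha}(1) immediately yields $\alpha_{i}=\mu(\fm^{i})-\mu(\fm^{i-1})$ for $1\leq i\leq r-1$, which is also the formula already given for $\alpha_{r}$.

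Next I would handle the two comparison identities. Proposition \ref{alpha-gamma} becomes $\alpha_{i}=\gamma_{i}+\nu_{i}$ after killing the $\alpha_{i,j}$, and Lemma \ref{CM}(3) gives $\fm^{i}\cap xA=x\fm^{i-1}$, so the numerator defining $\nu_{i}$ lies in the denominator and $\nu_{i}=0$; hence $\alpha_{i}=\gamma_{i}$. Proposition \ref{beta-alpha} reads $\beta_{i}+g_{i-1}=\alpha_{i}+g_{i}$ in the notation of Section~3, and by Lemma \ref{CM}(4) we have $(\fm^{r}:x^{r-i})=\fm^{i}$ and $(\fm^{r}:x^{r-i-1})=\fm^{i+1}$, so $g_{i}=\la(\fm^{i}/\fm^{i})=0$; therefore $\alpha_{i}=\beta_{i}$. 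The boundary case $i=r$ is covered by the separate identity $\alpha_{r}-\beta_{r}=\la((\fm^{r}:x)/\fm^{r-1})$ established inside the proof of Proposition \ref{beta-alpha}, which vanishes by the same colon condition.

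Finally I would address the strict positivity $\alpha_{i}>0$. Using $\alpha_{i}=\gamma_{i}=\la(\overline{\fm}^{i}/\overline{\fm}^{i+1})$, I reduce it to a statement about the artinian local ring $A/xA$ with maximal ideal $\overline{\fm}$. From $\fm^{r+1}=x\fm^{r}\subseteq xA$ we get $\overline{\fm}^{r+1}=0$, while if $\overline{\fm}^{r}=0$ then $\fm^{r}\subseteq xA$, and combined with $\fm^{r}\cap xA=x\fm^{r-1}$ this gives $\fm^{r}=x\fm^{r-1}$, contradicting the definition of $r$. Hence $\overline{\fm}^{r}\neq 0$, and a Nakayama argument then rules out $\overline{\fm}^{i}=\overline{\fm}^{i+1}$ for any $1\leq i\leq r$. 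The only delicate point in the whole plan is the positivity step: the three displayed equalities are pure bookkeeping, but here one must use the characterization of $r$ as the reduction number to prevent the Hilbert function of $A/xA$ from having an early truncation.
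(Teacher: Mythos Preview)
Your proof is correct and follows essentially the same route as the paper: both obtain the three equalities by invoking Lemma~\ref{CM} to annihilate the correction terms $f_i$, $g_i$, $\nu_i$, and $\alpha_{i,j}$ appearing in the comparison formulas of Section~3 (Lemma~\ref{alpha}, Proposition~\ref{beta-alpha}, Proposition~\ref{alpha-gamma}). The only minor deviation is the positivity step: the paper simply cites \cite[Corollary~16]{CZ} for $\alpha_i=\la(\fm^i/(\fm^{i+1}+x\fm^{i-1}))>0$, whereas you supply a self-contained Nakayama argument on the artinian ring $A/xA$ showing $\gamma_i>0$; both arguments are valid and yours has the advantage of not requiring the external reference.
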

\begin{proof}
By the results obtained in the above section
$$\begin{array}{l}
\alpha_i=\mu(\fm^{i})-\mu(\fm^{i-1})-f_i+f_{i-1},\\
\beta_i-\alpha_i=g_i-g_{i-1},\\
\alpha_i +\sum_{j=1}^{r-i-1} \alpha_{i,j}= \gamma_i +\nu_i,\\
\beta_i+\sum_{j=1}^{r-i-1}\alpha_{i,j}=\gamma_i+ g_i-g_{i-1}.
\end{array}$$
Then, Lemma \ref{CM} gives that $f_i=g_i=\nu_i=\alpha_{i,j}=0$ for
all $i,j$ if $G(\fm)$ is Cohen-Macaulay and the equalities hold.

Also, \cite[Corollary 16]{CZ} proves that
$\alpha_i=\la(\fm^i/(\fm^{i+1}+x\fm^{i-1}))>0$.
\end{proof}

\begin{thm}
\label{GCM2} The following conditions are equivalent:
\begin{enumerate}
\item $G(\fm)$ is a Cohen-Macaulay ring.

\item $\alpha_i=\beta_i$ for $i\leq r$.

\item $\alpha_i=\gamma_i$ for $i\leq r$.

\item $\beta_i=\gamma_i$ for $i\leq r$.
\end{enumerate}
\end{thm}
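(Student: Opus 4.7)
The plan is to organize the proof around a single observation: each of the equalities (2), (3), (4) compares the three families of invariants termwise, while Corollary \ref{comparing-sums} already controls them in the much weaker form of weighted sums $\sum_{i=1}^r i\alpha_i$, $\sum_{i=1}^r i\beta_i$, $\sum_{i=1}^r i\gamma_i$. So the whole argument reduces to passing between the pointwise and the aggregated versions.

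First I would establish the implication $(1)\Rightarrow (2),(3),(4)$ in one stroke by quoting Theorem \ref{GCM}: when $G(\fm)$ is Cohen-Macaulay, all three of $\alpha_i$, $\beta_i$, $\gamma_i$ equal $\mu(\fm^i)-\mu(\fm^{i-1})$ for $1\leq i\leq r$, so the three pointwise equalities hold simultaneously.

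For the three converse implications I would proceed uniformly. Given any of the pointwise identities, multiply by $i$ and sum over $1\leq i\leq r$ to obtain the corresponding identity between $\sum i\alpha_i$, $\sum i\beta_i$, $\sum i\gamma_i$. Corollary \ref{comparing-sums} asserts that
\[
\sum_{i=1}^r i\gamma_i \;\leq\; \sum_{i=1}^r i\beta_i \;\leq\; \sum_{i=1}^r i\alpha_i,
\]
and that any one of these being an equality already forces $G(\fm)$ to be Cohen-Macaulay. Hence each of (2), (3), (4) implies (1). (Alternatively, $(2)\Rightarrow (1)$ can be cited directly from Corollary \ref{sum-alpha=sum-beta}.)

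There is no real obstacle: the theorem is essentially bookkeeping on top of Theorem \ref{GCM} and Corollary \ref{comparing-sums}. The only point that deserves an explicit sentence in the write-up is why the nonstrict chain of inequalities in Corollary \ref{comparing-sums} is indeed triggered by any single pointwise equality (which is immediate, since equal sequences have equal weighted sums). This makes the proof short and essentially a two-line reduction to the previously established results.
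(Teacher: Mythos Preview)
Your proposal is correct and follows essentially the same route as the paper: the paper's proof likewise invokes Theorem~\ref{GCM} for $(1)\Rightarrow(2),(3),(4)$ and Corollary~\ref{comparing-sums} for the converse. Your only addition is spelling out the passage from the pointwise equalities to the weighted-sum equalities, which the paper leaves implicit.
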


\begin{proof}
By Corollary \ref{comparing-sums} any of the conditions (2), (3) or (4) implies that $G(\fm)$ is Cohen-Macaulay.
Conversely, if the tangent cone $G(\fm)$ is Cohen-Macaulay, by
Theorem \ref{GCM} we have that (2), (3) and (4) hold.
\end{proof}

\begin{prop}
\label{alpha=mu beta=mu gamma=mu}
 Assume that any of the following equalities hold:

 \begin{enumerate}

\item[(1)] $\alpha_i=\mu(\fm^i)-\mu(\fm^{i-1})$ for $1\leq i
\leq r$;

\item[(2)]  $\beta_i=\mu(\fm^i)-\mu(\fm^{i-1})$ for $1\leq i
\leq r$;

\item[(3)] $\gamma_i=\mu(\fm^i)-\mu(\fm^{i-1})$ for $1\leq i
\leq r$.

\end{enumerate}

Then the tangent cone of $A$ is Cohen-Macaulay.
\end{prop}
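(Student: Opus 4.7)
The plan is to dispatch conditions $(1)$ and $(3)$ by a single summation argument, and to reduce $(2)$ to the Valabrega--Valla colon condition via Lemma \ref{2beta}.

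First I would record the telescoping identity
\[
\sum_{i=1}^{r} i\bigl(\mu(\fm^i)-\mu(\fm^{i-1})\bigr) \;=\; r\mu(\fm^r)-\sum_{i=0}^{r-1}\mu(\fm^i) \;=\; \rho,
\]
using the formula for $\rho$ recalled at the end of Section \ref{B}. Under hypothesis $(1)$ this gives $\sum i\alpha_i = \rho$, so Corollary \ref{sum-alpha-rho} immediately yields that $G(\fm)$ is Cohen-Macaulay. Under hypothesis $(3)$ the same calculation produces $\sum i\gamma_i = \rho = \sum i\beta_i$ (the latter by Lemma \ref{sum-beta}), and the equality in the leftmost inequality of Corollary \ref{comparing-sums} forces $G(\fm)$ Cohen-Macaulay.

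Hypothesis $(2)$ is genuinely more delicate because $\sum i\beta_i = \rho$ always holds and so carries no information by itself. The idea is to exploit the term-by-term equality. Set $a_k := \la((\fm^r:x^k)/(\fm^r:x^{k-1}))$ for $1\leq k\leq r$. Lemma \ref{2beta} rewrites $\beta_i = a_{r-i}-a_{r-i+1}$ for $1\leq i\leq r-1$ and $\beta_r = e-a_1$; the global identity $\sum\beta_i = e-1$ of Lemma \ref{sum-beta} then yields $a_r=1$. Substituting hypothesis $(2)$ and telescoping backward from $a_r=1$ produces $a_k = \mu(\fm^{r-k})$ for every $1\leq k\leq r$.

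The final step is an induction on $k$ that upgrades these numerical equalities to $(\fm^r:x^k) = \fm^{r-k}$ for $1\leq k\leq r$. For $k=1$ the a priori inclusion $\fm^{r-1}\subseteq (\fm^r:x)$ combined with $a_1 = \mu(\fm^{r-1}) = \la(\fm^{r-1}/\fm^r)$ forces equality. For the inductive step, assuming $(\fm^r:x^{k-1})=\fm^{r-k+1}$ lets us view $a_k$ as $\la((\fm^r:x^k)/\fm^{r-k+1})$; the inclusion $\fm^{r-k}/\fm^{r-k+1}\hookrightarrow (\fm^r:x^k)/\fm^{r-k+1}$ between modules of identical length $\mu(\fm^{r-k})$ is therefore an isomorphism, giving $(\fm^r:x^k)=\fm^{r-k}$. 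Substituting $k=r-i-1$ reduces condition $(5)$ of Lemma \ref{CM} to the triviality $\fm^i\cap\fm^{i+1}=\fm^{i+1}$, so $G(\fm)$ is Cohen-Macaulay. The main obstacle is precisely this bridge from the numerical equalities $a_k = \mu(\fm^{r-k})$ to the ideal-theoretic equalities $(\fm^r:x^k) = \fm^{r-k}$; once the right a priori inclusion is put in play, the length count does the rest.
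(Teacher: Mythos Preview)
Your proof is correct. For hypotheses $(1)$ and $(3)$ you argue exactly as the paper does: the telescoping identity $\sum i(\mu(\fm^i)-\mu(\fm^{i-1}))=\rho$ combined with Corollary~\ref{sum-alpha-rho} or Corollary~\ref{comparing-sums} finishes the job.

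For hypothesis $(2)$ your argument and the paper's reach the same inductive target, namely $(\fm^r:x^k)=\fm^{r-k}$ for all $k$, but the bookkeeping differs. The paper routes through the relation $\beta_i=\alpha_i+g_i-g_{i-1}$ of Proposition~\ref{beta-alpha} together with the formula $\alpha_i=\mu(\fm^i)-\mu(\fm^{i-1})-f_i+f_{i-1}$ from Lemma~\ref{alpha}; at each inductive step it kills $f_{i-1},g_{i-1},g_i$ and then reads off $f_i=0$. You instead telescope Lemma~\ref{2beta} directly to obtain the numerical values $a_k=\la\bigl((\fm^r:x^k)/(\fm^r:x^{k-1})\bigr)=\mu(\fm^{r-k})$, and then upgrade these to ideal equalities by the length comparison $\fm^{r-k}\subseteq(\fm^r:x^k)$ with both sides having colength $\mu(\fm^{r-k})$ over $(\fm^r:x^{k-1})=\fm^{r-k+1}$. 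Your path is a bit leaner since it avoids the auxiliary invariants $f_i,g_i$ and the detour through the $\alpha_i$'s, while the paper's version has the advantage of fitting into the $f,g,\nu$ framework already set up for Section~3. Substantively the two arguments are the same induction dressed differently.
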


\begin{proof}
(1) and (3) We observe that $\sum_{i=1}^r i(\mu(\fm^i)
-\mu(\fm^{i-1}))=\rho$. Then, if the equalities of (1) or (3)
occur, applying Lemma \ref{sum-beta} and Corollary
\ref{comparing-sums}, we obtain that $G(\fm)$ is Cohen-Macaulay.
%(1) By Lemma \ref{alpha} the assumptions imply that $f_i=0$ for all
%$i$ and so the tangent cone of $A$ is Cohen-Macaulay by Lemma
%\ref{CM} (6).

\medskip

(2) We will prove, by induction on $i$, that
$\beta_j=\mu(\fm^j)-\mu(\fm^{j-1})$ for $1\leq j\leq i$ implies
the equality $(\fm^r:x^{r-i-1})=\fm^{i+1}$. For $i=1$,
$\beta_1=\mu(\fm)-1-f_1=\mu(\fm)-1$ gives $f_1=0$, and so,
$(\fm^r:x^{r-2})=\fm^2$. Assume
$\beta_j=\mu(\fm^j)-\mu(\fm^{j-1})$ for $1\leq j\leq i-1$. Then,
by induction, $(\fm^r:x^{r-j-1})=\fm^{j+1}$ for $1\leq j\leq i-1$.
In particular, $(\fm^r:x^{r-i})=\fm^{i}$ and
$(\fm^r:x^{r-i+1})=\fm^{i-1}$ which produces $f_{i-1}=0$,
$g_{i}=0$ and $g_{i-1}=0$. Hence, $\beta_i
=\mu(\fm^{i})-\mu(\fm^{i-1})-f_{i}= \mu(\fm^{i})-\mu(\fm^{i-1})$
implies that
$$f_{i}=\la((\fm^{i}\cap(\fm^{r}:x^{r-i-1}))/\fm^{i+1})=\la((\fm^{r}:x^{r-i-1})/\fm^{i})=0.$$
Thus, $\beta_i=\mu(\fm^i)-\mu(\fm^{i-1})$ for $1\leq i\leq r$
implies that $(\fm^r:x^{r-i})=\fm^{i+1}$ for $1\leq i\leq r-1$ and
so, $G(\fm)$ is Cohen-Macaulay.

%\medskip

%(3) By Lemma \ref{alpha} and Proposition \ref{alpha-gamma},
%$\alpha_r=\gamma_r+\nu_r = \mu(\fm^{r})-\mu(\fm^{r-1})$. So, the
%hypothesis $\gamma_r= \mu(\fm^{r})-\mu(\fm^{r-1})$ implies
%$\nu_r=\la(xA\cap \fm^r/x\fm^{r-1})=0$.

%We can see by induction on $i$ that  $xA\cap \fm^{r-i}
%=x\fm^{r-i-1}$. The equality holds for $i=0$, and assume that the
%induction hypothesis is true for $j\leq i-1$; that is $xA\cap
%\fm^{r-j}=x\fm^{r-j-1}$ for $j\leq i-1$.

%\ By Lemma \ref{alpha} and Proposition \ref{alpha-gamma} we have
%that
%$$\mu(\fm^{r-i})-\mu(\fm^{r-i-1})-f_{r-i}+f_{r-i+1}+\sum_{j=1}^{i-1}
%\alpha_{r-i,j}=\gamma_{r-i}+\nu_{r-i},
%$$

%and so,
%$$-f_{r-i}+f_{r-i+1}+\sum_{j=1}^{i-1}\alpha_{r-i,j}=\nu_{r-i}$$.

%We have, for $2\leq k\leq i $, that
%$$ f_{r-i,1}=\cdots f_{r-k,1}= \cdots=f_{r-2,1}=0$$
%since $\la((x\fm^{r-k}\cap \fm^{r-k+2})/x\fm^{r-k+1})$ and
%induction hypothesis. In particular, by Lemma \ref{alpha},
%$\alpha_{k,l}=0$ for indexes $k,l$ satisfying $r-i+1\leq k+l\leq
%r-1$, $f_{r-i}=f_{r-i,i}=0$ and also $f_{r-i+1}=f_{r-i+1,i-1}=0$.

%Thus,$$\begin{array}{ll} 0=&\nu_{r-i}=\la((xA\cap \fm^{r-i}
%+\fm^{r-i+1})/(x\fm^{r-i-1} +\fm^{r-i+1}))\\ =& \la(xA\cap
%\fm^{r-i}/(x\fm^{r-i-1} +xA\cap \fm^{r-i+1}))\\ =& \la(xA\cap
%\fm^{r-i} /(x\fm^{r-i-1} +x\fm^{r-i}))\\ =& \la(xA\cap \fm^{r-i}
%/x\fm^{r-i-1} ).\end{array}$$
\end{proof}

We can summarize the above results in the following way:

\begin{theorem}
\label{mainCM} The following conditions are equivalent:
\begin{enumerate}
\item $G(\fm)$ is Cohen-Macaulay.

\item $G(\fm) \cong K[X]\oplus (K[X](-1))^{\mu (\fm)-1}\oplus
\cdots \oplus  (K[X](-r))^{\mu(\fm^r)-\mu(\fm^{r-1})}$.

\item $H_{G(\fm/xA)}(z)=1+(\mu (\fm)-1)z+\cdots
+(\mu(\fm^r)-\mu(\fm^{r-1}))z^r$.
\end{enumerate}

And in the equicharacteristic and complete case also with
\begin{enumerate}

\item [(4)]  $A'/A \cong (K[[X]]/XK[[X]])^{\mu (\fm)-1} \oplus
\cdots \oplus  (K[[X]]/ X^rK[[X]])^{\mu(\fm^r)-\mu(\fm^{r-1})}$.
\end{enumerate}

\end{theorem}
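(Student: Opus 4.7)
The plan is to organize this as a cycle of implications that exploits the identifications of the three families of invariants with multiplicity differences $\mu(\fm^i)-\mu(\fm^{i-1})$ that were established in Theorem \ref{GCM} and Proposition \ref{alpha=mu beta=mu gamma=mu}. The structural content of (2), (3), (4) is that the relevant decomposition has no torsion part and that the ``free'' multiplicities match the Hilbert function differences. So each statement amounts to the vanishing of the $\alpha_{i,j}$ (or the corresponding colon-ideal corrections) together with the correct values of $\alpha_i$, $\gamma_i$, $\beta_i$.

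First I would prove (1) $\Rightarrow$ (2), (3), and (4). Assuming $G(\fm)$ is Cohen-Macaulay, Theorem \ref{GCM} gives $\alpha_{i,j}=0$ for all $i,j$ and $\alpha_i=\beta_i=\gamma_i=\mu(\fm^i)-\mu(\fm^{i-1})$. Inserting $\alpha_{i,j}=0$ and these values of $\alpha_i$ into the $F(x)$-module decomposition recalled in the introduction gives (2). For (3), note that $H_{G(\fm/xA)}(z)=\sum_{i\geq 0}\gamma_i z^i$ by the very definition of the Apery invariants, so the values of the $\gamma_i$'s give (3). In the equicharacteristic complete case, the decomposition of $A'/A$ recorded right after Lemma \ref{beta} reads $A'/A\cong \bigoplus_{i=1}^{r}(W/x^iW)^{\beta_i}$; substituting the values of $\beta_i$ and using $W=K[[x]]$ gives (4).

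Next I would handle the converses. Implication (2) $\Rightarrow$ (1) is immediate since any such decomposition exhibits $G(\fm)$ as a free $F(x)$-module, hence Cohen-Macaulay. For (3) $\Rightarrow$ (1), comparing coefficients in the stated Hilbert series against $H_{G(\fm/xA)}(z)=\sum \gamma_i z^i$ yields $\gamma_i=\mu(\fm^i)-\mu(\fm^{i-1})$ for all $i\leq r$, and then Proposition \ref{alpha=mu beta=mu gamma=mu}(3) concludes. For (4) $\Rightarrow$ (1), comparing the stated direct sum decomposition with the one after Lemma \ref{beta} (which is uniquely determined by elementary divisors) yields $\beta_i=\mu(\fm^i)-\mu(\fm^{i-1})$, and Proposition \ref{alpha=mu beta=mu gamma=mu}(2) then gives that the tangent cone is Cohen-Macaulay.

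The hard work was already done in the earlier sections, so this theorem is essentially a repackaging of Theorem \ref{GCM} and Proposition \ref{alpha=mu beta=mu gamma=mu}. The only delicate point is ensuring that, in the converse directions, the assumed decompositions force the numerical equalities one wants: for (4) this requires invoking uniqueness of the invariant factor decomposition of the torsion $W$-module $A'/A$ (justified by $W$ being a PID), and for (3) it requires only that the Hilbert series of a finite-length graded module determine its graded Hilbert function. Once these identifications are in place, the previously established characterizations close the cycle.
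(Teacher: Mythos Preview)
Your proposal is correct and follows the same approach the paper intends: the paper presents this theorem without proof, introducing it with ``We can summarize the above results in the following way,'' so it is precisely the repackaging of Theorem~\ref{GCM} and Proposition~\ref{alpha=mu beta=mu gamma=mu} that you describe. One small remark: the vanishing of the $\alpha_{i,j}$ under the Cohen--Macaulay hypothesis is not part of the \emph{statement} of Theorem~\ref{GCM} but is established in its proof (via Lemma~\ref{CM}), so when you cite it you may want to point to Lemma~\ref{CM} or to the torsion-freeness of $G(\fm)$ over $F(x)$ directly.
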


\section{Some computations}

Let $(A,\fm)$ be an one dimensional Cohen-Macaulay local ring with
infinite residue field $K$, embedding dimension $b$, multiplicity
$e$ and reduction number $r$. Let $(x)$ be a minimal reduction of
the maximal ideal.

By the previous section, the micro-invariants of $A$, its Apery
numbers, and the invariants of its tangent cone coincide when
this last is a Cohen-Macaulay ring. Then, their values are completely determined by the differences of
the minimal number of generators of the consecutive powers of the maximal ideal.

\begin{cor}
\label{b=2}

Let $(A,\fm)$ be a Cohen-Macaulay local ring. If $b=2$ then
\begin{enumerate}
\item $\alpha_i=\beta_i=\gamma_i=1 \textrm{ for } 1\leq i \leq
e-1$,

\item $\displaystyle{G(\fm) \cong K[X]\oplus (K[X](-1))\oplus
\cdots \oplus (K[X](-e+1))}$,

\item $H_{G(\fm/xA)}(z)=1+z+\cdots +z^{e-1}$,

\item In the equicharacteristic complete case

 $ A'/A \cong (K[[X]]/XK[[X]]) \oplus
\cdots \oplus  (K[[X]]/ X^{e-1}K[[X]]).$
\end{enumerate}

\end{cor}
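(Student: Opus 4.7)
The plan is to reduce everything to showing that $G(\fm)$ is Cohen-Macaulay, after which Theorem \ref{mainCM} (together with Theorem \ref{GCM}) yields statements (1)--(4) once we identify each $\mu(\fm^i)-\mu(\fm^{i-1})$ with $1$.

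First I would collect the numerical input. Since $b=\mu(\fm)=2$, we may write $\fm=(x,y)$; the powers $\fm^n$ are then generated by the $n+1$ monomials $x^iy^{n-i}$, so $\mu(\fm^n)\leq n+1$. Combining this with the Eakin--Sathaye lower bound $n+1\leq\mu(\fm^n)$ valid for $n\leq r$ (recalled in Section \ref{B}), we obtain $\mu(\fm^n)=n+1$ for $1\leq n\leq r$. In particular $e=\mu(\fm^r)=r+1$, so $r=e-1$ and $\mu(\fm^i)-\mu(\fm^{i-1})=1$ for every $1\leq i\leq e-1$.

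Next I would prove the Cohen-Macaulayness of $G(\fm)$ by verifying condition (3) of Proposition \ref{alpha=mu beta=mu gamma=mu}. The Artinian local ring $A/xA$ has maximal ideal $\overline{\fm}=(\bar y)$, which is principal; therefore each quotient $\overline{\fm}^i/\overline{\fm}^{i+1}$ is a cyclic $K$-vector space and $\gamma_i\in\{0,1\}$. Lemma \ref{sum-gamma}(1) gives $\sum_{i=1}^{r}\gamma_i=e-1=r$, so necessarily $\gamma_i=1=\mu(\fm^i)-\mu(\fm^{i-1})$ for every $1\leq i\leq r$. Proposition \ref{alpha=mu beta=mu gamma=mu}(3) then concludes that $G(\fm)$ is Cohen-Macaulay.

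With the Cohen-Macaulay property secured, Theorem \ref{GCM} delivers $\alpha_i=\beta_i=\gamma_i=\mu(\fm^i)-\mu(\fm^{i-1})=1$ for $1\leq i\leq e-1$, which is (1); the remaining assertions (2), (3) and (4) are the corresponding specializations of Theorem \ref{mainCM} under the multiplicity sequence consisting of $e-1$ ones. There is no real obstacle here: the only point requiring care is observing that the maximal ideal of $A/xA$ is principal (forcing $\gamma_i\leq 1$), which is immediate from $\fm=(x,y)$.
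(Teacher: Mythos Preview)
Your argument is correct and reaches the same endpoint as the paper (applying Theorem~\ref{GCM} and Theorem~\ref{mainCM} once $G(\fm)$ is known to be Cohen--Macaulay and $\mu(\fm^i)-\mu(\fm^{i-1})=1$), but you take a different path to the Cohen--Macaulay property. The paper simply quotes this as a known fact (citing \cite[Proposition~26]{CZ}), whereas you derive it internally from Proposition~\ref{alpha=mu beta=mu gamma=mu}(3): the observation that $\overline{\fm}=(\bar y)$ is principal forces each $\gamma_i\le 1$, and combined with $\sum_{i=1}^{r}\gamma_i=e-1=r$ from Lemma~\ref{sum-gamma}(1) this pins down $\gamma_i=1=\mu(\fm^i)-\mu(\fm^{i-1})$. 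Your route has the virtue of being self-contained within the present paper and of illustrating that Proposition~\ref{alpha=mu beta=mu gamma=mu} already recovers this classical case; the paper's route is shorter because it outsources the Cohen--Macaulayness. One small point worth making explicit: you are tacitly using that the fixed superficial element $x$ lies in $\fm\setminus\fm^2$ and hence can be completed to a two-element generating set $\fm=(x,y)$; this is clear from the standing conventions but deserves a word.
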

\begin{proof}
It is known that $b=2$ implies that $G(\fm)$ is Cohen-Macaulay and
$\mu(\fm^i)- \mu(\fm^{i-1})=1$ for $i=1,\dots,r$ (see for example
\cite[Proposition 26]{CZ}) and $e=r+1$. So the result is obtained
by applying Proposition \ref{GCM} and Proposition \ref{mainCM}.
\end{proof}

We recall that $e=b+\la (\fm^2/x\fm)$. So, one says that $A$ has
minimal multiplicity when $e=b$ and that $A$ has almost minimal
multiplicity if $b=e+1$.

 When the ring has minimal multiplicity, or equivalently
has reduction number one, the tangent cone is Cohen-Macaulay and
the computation of its invariants, and hence of the
micro-invariants and Apery numbers  of the ring is direct.

\begin{cor}
\label{b=e}

 Let $(A,\fm)$ be a Cohen-Macaulay local ring with minimal
multiplicity, then

\begin{enumerate}
\item $\alpha_1=\beta_1=\gamma_1= e-1$,

\item $\displaystyle{G(\fm) \cong K[X]\oplus (K[X](-1))^{e-1}}$,

\item $H_{G(\fm/xA)}(z)=1+(e-1)z$,

\item In the equicharacteristic complete case

 $ A'/A \cong (K[[X]]/XK[[X]])^{e-1}.$
\end{enumerate}
\end{cor}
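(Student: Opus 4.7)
The plan is to reduce immediately to the previous theorem by showing that minimal multiplicity forces the tangent cone to be Cohen-Macaulay with reduction number one. The key observation is that the formula $e=b+\lambda(\fm^{2}/x\fm)$ recalled just before the statement implies that $e=b$ is equivalent to $\fm^{2}=x\fm$, i.e. $r=1$. Since $r\leq 2$ implies the torsion module $T(G(\fm))$ vanishes (this is the content of Remark \ref{known}), the tangent cone $G(\fm)$ is Cohen-Macaulay.

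Once $G(\fm)$ is Cohen-Macaulay, I would simply invoke Theorem \ref{mainCM}. With $r=1$, all the indices $i$ in the relevant sums run over $\{1\}$, and $\mu(\fm)-\mu(\fm^{0})=\mu(\fm)-1=b-1=e-1$. Consequently, the decomposition in statement (2) of Theorem \ref{mainCM} collapses to $G(\fm)\cong K[X]\oplus(K[X](-1))^{e-1}$, which gives part (2) of the corollary. Similarly the Hilbert series in statement (3) of Theorem \ref{mainCM} reduces to $1+(e-1)z$, yielding part (3), and the equicharacteristic complete decomposition in statement (4) collapses to $(K[[X]]/XK[[X]])^{e-1}$, yielding part (4). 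Finally, part (1) is the common value $\alpha_1=\beta_1=\gamma_1=\mu(\fm)-\mu(\fm^{0})=e-1$ delivered by Theorem \ref{GCM}.

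There is essentially no obstacle here: the corollary is an immediate specialization of Theorems \ref{GCM} and \ref{mainCM} to the case $r=1$. The only small point worth stating carefully is the equivalence $e=b \Leftrightarrow r=1$, and even that is a direct consequence of the length formula $e=b+\lambda(\fm^{2}/x\fm)$ combined with the definition of the reduction number.
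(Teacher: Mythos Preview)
Your proposal is correct and matches the paper's approach: the paper states just before the corollary that minimal multiplicity is equivalent to reduction number one, that the tangent cone is then Cohen-Macaulay, and that the computation of the invariants is direct, so the corollary is given without a separate proof. Your argument makes this explicit by invoking Remark \ref{known} (or Lemma \ref{CM}) and then specializing Theorems \ref{GCM} and \ref{mainCM} to $r=1$, which is exactly the intended route.
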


We note that Corollary 5.1 (4) and Corollary 5.2 (4) were already shown in
\cite[Proposition 4.1]{E}.

The case of rings with almost minimal multiplicity will provide
examples of micro-invariants and Apery numbers for rings for which
their tangent cones are not Cohen-Macaulay. In this case the
maximal ideal is a "Sally ideal", which means that
$\la(\fm^2/x\fm)=1$. Sally ideals are studied in \cite{R} by M. E.
Rossi, \cite{JPV} by A. V. Jayanthan, T. J. Puthenpurakal and J.
K. Verma and \cite{CZ} by the authors. We collect in a lemma some
known results for  this case.

\begin{lem}
\label{almost} Let $(A,\fm)$ be a Cohen-Macaulay local ring with
almost minimal multiplicity $e$. Then
\begin{enumerate}
\item $\fm^2 $ is not contained in $(x) $.

 \item $\fm^{n+1}\subseteq x\fm^{n-1}$ for $n\geq 2$.

 \item $\la(\fm^{n+1}/x\fm^n)=1$ for $1\leq n\leq r-1$.

 \item
$\mu(\fm^{n})=\begin{cases}\mu (\fm) \textrm{ for } 1\leq n\leq r-1 \\
\mu (\fm)+1 \textrm{ for } n \geq r \end{cases}$

\item $G(\fm)$ is Cohen-Macaulay if and only the reduction number
of $A$ is $2$, if and only if $\mu(\fm^2)=\mu(\fm)+1$.
\end{enumerate}
\end{lem}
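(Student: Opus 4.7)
My plan is to handle the five statements in order, using the decomposition $\fm^2 = x\fm + fA$ (which exists because $\la(\fm^2/x\fm) = e - b = 1$) as the main structural tool.

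For (1), I argue by contradiction. If $\fm^2 \subseteq xA$, every $h \in \fm^2$ has the form $h = xa$; since $x \notin \fm^2$ (as $x$ is superficial of degree one), $a$ cannot be a unit, so $a \in \fm$ and $\fm^2 \subseteq x\fm$, contradicting $\la(\fm^2/x\fm) = 1$.

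For (2), I observe that $\fm^2/x\fm$ has length one and is therefore a simple $A$-module, hence annihilated by $\fm$; this gives $\fm^3 = \fm \cdot \fm^2 \subseteq x\fm$. The general inclusion $\fm^{n+1} \subseteq x\fm^{n-1}$ for $n \ge 3$ then follows by induction: $\fm^{n+1} = \fm \cdot \fm^n \subseteq \fm \cdot x\fm^{n-2} = x\fm^{n-1}$.

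Parts (3) and (4) are tied together by Sally's formula $e = \mu(\fm^n) + \la(\fm^{n+1}/x\fm^n)$. The case $n \ge r$ is immediate ($\fm^{n+1} = x\fm^n$ gives $\mu(\fm^n) = e$), the case $n = 1$ is the definition of almost minimal multiplicity, and the lower bound $\la(\fm^{n+1}/x\fm^n) \ge 1$ for $n \le r-1$ is built into the definition of the reduction number. The substantive obstacle is the upper bound $\la(\fm^{n+1}/x\fm^n) \le 1$ for $2 \le n \le r-1$. I obtain it from the decomposition $\fm^{n+1} = x\fm^n + f\fm^{n-1}$ together with the fact that $\theta := f/x$ lies in $A'$ and satisfies $\theta \fm \subseteq \fm$ (since $f\fm \subseteq \fm^3 \subseteq x\fm$ by (2)); this yields an embedding $\fm^{n+1}/x\fm^n \cong (\theta \fm^{n-1} + \fm^n)/\fm^n \hookrightarrow \fm^{n-1}/\fm^n$, and combining with the Sally module analysis in \cite{R}, \cite{JPV}, \cite{CZ} forces the quotient to have length at most one.

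For (5), I invoke Valabrega-Valla (Lemma \ref{CM}): $G(\fm)$ is Cohen-Macaulay iff $\fm^n \cap xA = x\fm^{n-1}$ for all $n \ge 1$. By (2), for $n \ge 3$ we have $\fm^n \subseteq x\fm^{n-2} \subseteq xA$, so $\fm^n \cap xA = \fm^n$, and the criterion at level $n \ge 3$ reduces to $\fm^n = x\fm^{n-1}$, i.e.\ $r \le 2$. Since (1) forces $r \ge 2$, Cohen-Macaulayness of $G(\fm)$ is equivalent to $r = 2$; by (4), this is in turn equivalent to $\mu(\fm^2) = e = \mu(\fm) + 1$.
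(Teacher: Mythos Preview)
Your arguments for (1), (2), (4), and (5) are correct and, in places, more explicit than the paper's own proof (the paper simply cites \cite{JPV} for (5) and only sketches the base case $\fm^3\subseteq x\fm$ for (2); your simple-module/Nakayama phrasing and your Valabrega--Valla reduction are clean and complete).

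The weak point is (3). The embedding you build,
\[
\fm^{n+1}/x\fm^n \;\cong\; (\theta\fm^{n-1}+\fm^n)/\fm^n \;\hookrightarrow\; \fm^{n-1}/\fm^n,
\]
is correct, but it does not bound the length by $1$: the target has length $\mu(\fm^{n-1})\ge b\ge 3$. So the sentence ``this yields \dots\ forces the quotient to have length at most one'' is a non sequitur; what actually carries the conclusion is the bare citation of \cite{R}, \cite{JPV}, \cite{CZ}, making the $\theta$ detour idle. (The paper itself just cites \cite[Corollary~1.7]{R} here, so you are not worse off than the paper, but your text reads as if the $\theta$-embedding were the reason.)

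If you want a self-contained argument, stay with your decomposition $\fm^{n+1}=x\fm^n+f\fm^{n-1}$ but use it to produce a \emph{surjection} in the right direction: multiplication by $f$ gives
\[
\fm^{n-1}\twoheadrightarrow \fm^{n+1}/x\fm^n,\qquad a\mapsto \overline{fa},
\]
and since $f\cdot x\fm^{n-2}=x(f\fm^{n-2})\subseteq x\fm^n$, it factors through $\fm^{n-1}/x\fm^{n-2}$. Hence $\la(\fm^{n+1}/x\fm^n)\le \la(\fm^{n-1}/x\fm^{n-2})$, and induction from $\la(\fm^2/x\fm)=1$ gives (3) immediately. This replaces the appeal to the Sally-module literature with a two-line induction.
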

\begin{proof}
Observe that $A$ has almost minimal embedding dimension if and
only if $\la(\fm^2/x\fm)=1$.

If $\fm^2\subseteq (x)$ then the exact sequence
$$ 0\longrightarrow \fm^2/x\fm \longrightarrow (x)/x\fm
\longrightarrow (x)/\fm^2 \longrightarrow 0$$ gives, by using the
additivity of the length the equality  $(x)=\fm^2$ which is not
possible since $x$ is part of a minimal set of generators for
$\fm$.

In order to prove that $\fm^3 \subseteq x\fm$ we consider the
exact sequence
$$ 0\longrightarrow (\fm^3+x\fm)/x\fm \longrightarrow \fm^2/x\fm
\longrightarrow \fm^2/(\fm^3+x\fm)\longrightarrow 0, $$ the
Nakayama's Lemma and the additivity of the length gives the
result.

The assertion (3) can be found in the proof of \cite[Corollary
1.7]{R} and (5) in \cite[Theorem 3.3]{JPV}.

The equality $b+1=e=\la(\fm^n/x\fm^n)=\mu (\fm^n)+\la
(\fm^{n+1}/x\fm^n)$ gives the last assertion since $\la
(\fm^{n+1}/x\fm^n)=0$ for $n\geq r$ and $\la( \fm^{n+1}/x\fm^n)=1$
for $n<r$.
\end{proof}

\begin{cor}
\label{almost2} Let $(A,\fm)$ be a Cohen-Macaulay local ring with
almost minimal multiplicity $e$ and reduction number 2, then

\begin{enumerate}
\item $\alpha_1=\beta_1=\gamma_1 =e-2$ and
$\alpha_2=\beta_2=\gamma_2=1$.

\item $\displaystyle{G(\fm) \cong K[X]\oplus (K[X](-1))^{e-2}}
\oplus K[X](-2)$,

\item $H_{G(\fm/xA)}(z)=1+(e-2)z+z^2$,

\item In the equicharacteristic complete case

 $ A'/A \cong (K[[X]]/XK[[X]])^{e-2} \oplus K[[X]]/X^2K[[X]].$
\end{enumerate}
\end{cor}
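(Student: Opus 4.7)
The plan is to reduce the statement entirely to the Cohen-Macaulay case already handled in Theorem \ref{mainCM}, combined with the explicit values of $\mu(\fm^i)$ supplied by Lemma \ref{almost}. Almost all the work has in fact been done; the corollary is essentially a substitution of numerical data.

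First I would invoke Lemma \ref{almost}(5): since $A$ has almost minimal multiplicity and reduction number $r=2$, the tangent cone $G(\fm)$ is Cohen-Macaulay. This single observation is the pivot point of the entire argument, because it places us inside the situation of Theorem \ref{GCM}, which gives at once
$$\alpha_i = \beta_i = \gamma_i = \mu(\fm^i) - \mu(\fm^{i-1})$$
for $1 \le i \le r = 2$. So proving (1) now reduces to computing these two differences of minimal numbers of generators.

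Next I would compute $\mu(\fm)$ and $\mu(\fm^2)$ using Lemma \ref{almost}(4). Almost minimal multiplicity means $b = e-1$, so $\mu(\fm) = b = e-1$, and the case distinction in Lemma \ref{almost}(4) with $r=2$ gives $\mu(\fm^n) = e$ for $n \ge 2$, in particular $\mu(\fm^2) = e$. Plugging these into the formula above yields
$$\alpha_1 = \beta_1 = \gamma_1 = (e-1) - 1 = e-2, \qquad \alpha_2 = \beta_2 = \gamma_2 = e - (e-1) = 1,$$
which is part (1).

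For parts (2), (3) and (4) I would simply invoke the three equivalent descriptions of the Cohen-Macaulay tangent cone given in Theorem \ref{mainCM}, substituting the values $\mu(\fm)-1 = e-2$ and $\mu(\fm^2)-\mu(\fm) = 1$ into the formulas for $G(\fm)$, $H_{G(\fm/xA)}(z)$, and (in the equicharacteristic complete case) $A'/A$. Since no terms of degree $\ge 3$ appear (the sums in Theorem \ref{mainCM} stop at $r = 2$), the stated shapes drop out immediately.

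There is essentially no obstacle here beyond checking that the hypotheses line up: the only subtle point is ensuring that $r=2$ together with almost minimal multiplicity really triggers Lemma \ref{almost}(5), and that the degree-$0$ generator is counted as $\mu(\fm^0)=1$ so that $\alpha_1 = \mu(\fm)-1$. Everything else is a mechanical substitution into Theorem \ref{mainCM}.
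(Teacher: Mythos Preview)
Your proposal is correct and matches the paper's intended argument: the paper states this corollary without proof, relying implicitly on exactly the combination you describe---Lemma~\ref{almost}(5) to obtain Cohen--Macaulayness of $G(\fm)$ when $r=2$, then Theorem~\ref{GCM} and Theorem~\ref{mainCM} with the values of $\mu(\fm^i)$ from Lemma~\ref{almost}(4) substituted in. There is nothing to add.
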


\begin{cor}
\label{almost3}
 Let $(A,\fm)$ be a Cohen-Macaulay local ring with
almost minimal multiplicity and reduction number 3. Then
$$\begin{array}{ll}
(\alpha_1, \alpha_2,\alpha_3) &=(e-3,1,1)\\ (\beta_1,
\beta_2,\beta_3) &=(e-3,2,0)
\\(\gamma_1,\gamma_2,\gamma_3) &=(e-2,1,0). \end{array}$$
\end{cor}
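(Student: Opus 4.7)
The plan is to exploit the data from Lemma \ref{almost} for $r=3$ together with the three global sum identities
\[
\sum_{i=1}^{r}\alpha_i=\sum_{i=1}^{r}\beta_i=\sum_{i=1}^{r}\gamma_i=e-1
\]
(from the introduction and Lemmas \ref{sum-beta}(1) and \ref{sum-gamma}(1)) and $\sum_{i=1}^{r}i\beta_i=\rho$ (Lemma \ref{sum-beta}(2)). Lemma \ref{almost} under almost minimal multiplicity gives $\mu(\fm)=\mu(\fm^2)=e-1$, $\mu(\fm^3)=e$, $\la(\fm^2/x\fm)=\la(\fm^3/x\fm^2)=1$, and crucially $\fm^3\subseteq x\fm\subseteq xA$; the expression for $\rho$ from Section~\ref{B} then yields $\rho=(e-1)+1+1=e+1$.

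For the $\alpha_i$'s I apply Lemma \ref{alpha}(1) directly: $\alpha_3=\mu(\fm^3)-\mu(\fm^2)=1$, and the first formulation gives $\alpha_2=\la(\fm^2/(\fm^2\cap\fm^3+x\fm))=\la(\fm^2/(\fm^3+x\fm))$; since $\fm^3\subseteq x\fm$ this collapses to $\la(\fm^2/x\fm)=1$, so $\alpha_2=1$ and $\alpha_1=e-3$ by the sum identity. The $\gamma_i$'s drop out from the definition: $\fm^n\subseteq xA$ for $n\ge 3$ gives $\gamma_3=\la(xA/xA)=0$; the class of $x$ in $\fm/\fm^2$ is nonzero (because $x$ is superficial of degree one, hence not in $\fm^2$), so $\gamma_1=\la(\fm/(\fm^2+xA))=\mu(\fm)-1=e-2$, and $\gamma_2=1$ follows from the sum.

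The crux is to determine $\beta_3$; once that is in hand, the equations $\beta_1+\beta_2+\beta_3=e-1$ and $\beta_1+2\beta_2+3\beta_3=e+1$ force $(\beta_1,\beta_2)=(e-3,2)$. I plan to use Lemma \ref{beta}:
\[
\beta_3=\la((x^3A+\fm^5)/(x^3A+\fm^6))-\la((x^3A+\fm^6)/(x^3A+\fm^7)).
\]
Iterating $\fm^{n+1}=x\fm^n$ for $n\ge r=3$ yields $\fm^n=x^{n-3}\fm^3$, and combining with $\fm^3\subseteq xA$ gives $\fm^n\subseteq x^{n-2}A\subseteq x^3A$ for every $n\ge 5$. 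Therefore $x^3A+\fm^n=x^3A$ in that range, both differences vanish, and $\beta_3=0$. The only mild technical point is tracking these power comparisons carefully; the remainder of the argument is bookkeeping around the sum identities.
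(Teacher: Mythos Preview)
Your proof is correct. The $\alpha_i$ computation is essentially the paper's, though you reach $\alpha_2$ via the first displayed formula in Lemma~\ref{alpha}(1) and then recover $\alpha_1$ from the sum $\sum\alpha_i=e-1$, whereas the paper computes $\alpha_1$ and $\alpha_2$ simultaneously from the colon-ideal expression $\la((\fm^3:x)/\fm^2)=1$; these are equivalent manipulations.

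Where your argument genuinely diverges is in handling $\beta_i$ and $\gamma_i$. The paper invokes the comparison formulas of Proposition~\ref{beta-alpha} and Proposition~\ref{alpha-gamma} to pass from the already-computed $\alpha_i$'s to the other two families. You instead compute $\gamma_3=0$ and $\gamma_1=e-2$ directly from the definition (using $\fm^3\subseteq xA$), and you compute $\beta_3=0$ by showing $\fm^n\subseteq x^3A$ for $n\ge 5$ in the formula of Lemma~\ref{beta}, then finish both families with the global identities $\sum\gamma_i=\sum\beta_i=e-1$ and $\sum i\beta_i=\rho=e+1$. This is a cleaner and more self-contained route: it bypasses the machinery of Section~3 entirely and makes the almost-minimal-multiplicity hypothesis do all the work. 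The paper's route, by contrast, illustrates how the comparison propositions operate in a concrete case. One small remark: Lemma~\ref{beta} is stated in the paper under the equicharacteristic-complete hypothesis, but the second equality there (the one you use) relies only on Lemma~\ref{A'} and hence holds in general---indeed the paper itself uses it without that hypothesis in the proof of Lemma~\ref{2beta}. You might note this explicitly.
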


\begin{proof}
By Lemma \ref{alpha} and Lemma \ref{almost} (4) we get that
$$ \begin{array}{l} \alpha_1=b-1-\la ((\fm^3:x)/\fm^2)\\
\alpha_2= \la ((\fm^3:x)/\fm^2)\\
\alpha_3=1. \end{array}$$

Now, again by Lemma \ref{almost} (2) and (3) we have $\la
((\fm^3:x)/\fm^2)=\la((x\fm \cap \fm^3)/x\fm^2)=
 \la(\fm^3/x\fm^2)=1$ and so the statement for the $\alpha_i's$.

 In order to determine the values of the micro-invariants and the Apery numbers we just need to apply respectively
 Lemma \ref{beta-alpha} and Lemma \ref{alpha-gamma}.
\end{proof}

\begin{cor}
\label{almostr}
 Let $(A,\fm)$ be a Cohen-Macaulay local ring with
almost minimal multiplicity $e$ and reduction number $r\geq 3$.
Then
\begin{enumerate}
\item $(\gamma_1,\dots ,\gamma_r)=(e-2,1,0\dots,0)$.
 \item $\alpha_r=\alpha_{r-1}=1$.
 \item $\beta_r=0$.
\end{enumerate}

\end{cor}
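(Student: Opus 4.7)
The plan is to exploit Lemma~\ref{almost} throughout: namely, $\mu(\fm^n)=e-1$ for $1\leq n\leq r-1$ and $\mu(\fm^r)=e$ (part (4)); $\fm^{n+1}\subseteq x\fm^{n-1}$ for every $n\geq 2$ (part (2)); and $\la(\fm^{n+1}/x\fm^n)=1$ for $1\leq n\leq r-1$ (part (3)). These drive every computation.

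For (1), note that for $i\geq 3$ Lemma~\ref{almost}(2) applied with $n=i-1\geq 2$ gives $\fm^i\subseteq x\fm^{i-2}\subseteq xA$, hence $\fm^i+xA=xA$ and $\gamma_i=0$. Next, $\gamma_1=\mu(\fm)-1=(e-1)-1=e-2$, because $x$ is part of a minimal generating system of $\fm$. The value $\gamma_2=1$ is then forced by the identity $\sum_i\gamma_i=e-1$ of Lemma~\ref{sum-gamma}(1).

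For (2), $\alpha_r=\mu(\fm^r)-\mu(\fm^{r-1})=1$ is immediate from Lemma~\ref{alpha}(1). For $\alpha_{r-1}$, I apply the first displayed formula of Lemma~\ref{alpha}(1) with $i=r-1$: since $(\fm^r:x^{0})=\fm^r$, this reduces to $\alpha_{r-1}=\la(\fm^{r-1}/(\fm^r+x\fm^{r-2}))$. The containment $\fm^r\subseteq x\fm^{r-2}$ from Lemma~\ref{almost}(2) collapses the denominator to $x\fm^{r-2}$, and Lemma~\ref{almost}(3) evaluates the resulting length as $1$.

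For (3), I use Lemma~\ref{2beta}, which gives $\beta_r=\mu(\fm^r)-\la((\fm^r:x)/\fm^r)$. The regularity of $x$ together with $\fm^r\subseteq x\fm^{r-2}$ forces $(\fm^r:x)\subseteq \fm^{r-2}$, so the chain $\fm^r\subseteq \fm^{r-1}\subseteq(\fm^r:x)\subseteq \fm^{r-2}$ splits the length as $\la((\fm^r:x)/\fm^r)=\la((\fm^r:x)/\fm^{r-1})+\mu(\fm^{r-1})$. The second summand is $e-1$. The first summand equals $f_{r-2,2}$ (again because $(\fm^r:x)\subseteq\fm^{r-2}$ and $(\fm^{r+1}:x^2)=(\fm^r:x)$); by the third equality of Lemma~\ref{alpha}(1) with $i=r-1$, combined with $f_{r-1,1}=0$ (Remark~\ref{B3}) and $\mu(\fm^{r-1})=\mu(\fm^{r-2})$, it coincides with $\alpha_{r-1}=1$. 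Summing, $\la((\fm^r:x)/\fm^r)=e$, whence $\beta_r=e-e=0$. The main obstacle is purely bookkeeping in this final step: one must correctly match the several equivalent formulations of $\alpha_{r-1}$ in Lemma~\ref{alpha}(1) with the colon-ideal length appearing in Lemma~\ref{2beta}, and in particular verify the inclusion $(\fm^r:x)\subseteq \fm^{r-2}$ that makes these two pictures compatible.
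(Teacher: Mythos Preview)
Your proof is correct and follows essentially the same line as the paper's: the key inputs are Lemma~\ref{almost} (parts (2)--(4)) together with the colon-ideal formulas from Lemma~\ref{alpha}, and you use them in the same way. The only noteworthy variations are cosmetic: for (1) you deduce $\gamma_2=1$ from the sum identity $\sum_i\gamma_i=e-1$ rather than computing it directly from $\la(\fm^2/x\fm)=1$; and for (3) you start from Lemma~\ref{2beta} and then unwind to the identity $\beta_r=\alpha_r-\la((\fm^r:x)/\fm^{r-1})$, whereas the paper invokes Proposition~\ref{beta-alpha} to obtain that identity in one step---your route is slightly longer but reaches the same computation $\la((\fm^r:x)/\fm^{r-1})=\la(\fm^r/x\fm^{r-1})=1$.
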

\begin{proof}
By definition, $\gamma_i=\la((\fm^i+xA)/(\fm^{i+1}+xA)$  and,
since $A$ has almost  minimal  multiplicity, $\fm^i \subseteq xA$
for $i\geq 3$, hence $\gamma_i=0$ for $i\geq 3$. Moreover,
$\gamma_1=\mu (\fm) -1=e-2$ and
$\gamma_2=\la((\fm^2+xA)/(\fm^3+xA))=\la((\fm^2+xA)/xA)=\la(\fm^2/x\fm)=1$.
So, (1) is proved.

For (2), combining  Lemma \ref{alpha} and Lemma \ref{almost} (4)
one has that $\alpha_r=1$ and $$\alpha_{r-1}=\la
((\fm^{r-2}\cap(\fm^r:x))/\fm^{r-1})=\la((x\fm^{r-2}\cap
\fm^r)/x\fm^{r-1}).$$ Moreover Lemma \ref{almost} (2) gives the
inclusion $\fm^r \subseteq x\fm^{r-2}$ and so the equalities
$\alpha_{r-1}=\la(\fm^r/x\fm^{r-1})=1$.

Finally, we can obtain (3) by
 Proposition \ref{beta-alpha} which provides, in the almost minimal
 multiplicity case, the equality $\beta_r=\alpha_r -\la((\fm^r:x)/\fm^{r-1})=1-\la(\fm^r/x\fm^{r-1})=0$.
 \end{proof}

\subsection{Numerical semigroups rings}

Let $\mathbb N$ be the set of non-negative integers.  Recall that
a numerical semigroup $S$ is a subset of $\mathbb N$ that is
closed under addition, contains the zero element and has finite
complement in $\mathbb N$. A numerical semigroup $S$ is always
finitely generated; that is, there exist integers $n_1,\dots,n_l$
such that $S=\langle n_1,\dots ,n_l \rangle=\{ \alpha_1 n_1
+\cdots  +\alpha_l n_l ; \alpha_i\in \mathbb N\}$. Moreover, every
numerical semigroup has an unique minimal system of generators
$n_1,\dots ,n_{b(S)}$. The least integer belonging to $S$ is known
as the multiplicity of $S$ and it is denoted by $e(S)$.

A relative ideal of $S$ is a nonempty set $I$ of non-negative
integers such that $I+S\subset I$ and $d+I\subseteq S$ for some
$d\in S$. An ideal of $S$ is then a relative ideal of $S$
contained in $S$. If $i_1,\dots ,i_k$ is a subset of non-negative
integers, then the set $\{i_1,\dots ,i_k\}+S=(i_1+S)\cup \cdots
\cup (i_k+S)$ is a relative ideal of $S$ and $i_1,\dots ,i_k$ is a
system of generators of $I$. Note that, if $I$ is an ideal of $S$,
then $I\cup \{0\}$ is a numerical semigroup and so $I$ is finitely
generated. We denote by $M$ the maximal ideal of $S$, that is,
$M=S\setminus \{0\}$. $M$ is then the ideal generated by a system
of generators of $S$. If $I$ and $J$ are relative ideals of $S$
then $I+J=\{i+j;i\in I, j\in
 J\}$ is also a relative ideal of $S$. Finally, we denote by $\Ap (I)$ the Apery
set of $I$ with respect to $e(S)$, defined as the set of the
smallest elements in $I$ in each residue class module $e(S)$.

Let $V=k[[t]]$ be the formal power series ring over a field $k$.
Given a numerical semigroup $S=\langle n_1,\dots ,n_b \rangle$
minimally generated by $0<e=e(S)=n_1<\cdots < n_b=n_{b(S)}  $ we
consider the ring associated to $S$ defined as
$A=k[[S]]=k[[t^{n_1},\dots ,t^{n_b}]] \subseteq V$. Let
$\fm=(t^{n_1},\dots ,t^{n_b})$ be the maximal ideal of $A$. Then
$A$ is a Cohen-Macaulay local ring of dimension one with
multiplicity $e$ and embedding dimension $b$.  These kind of rings
are known as numerical semigroup rings. The ideals
$(t^{i_1},\dots,t^{i_k})$ of $A$ are such that for $v$, the
$t$-adic valuation, $ v((t^{i_1},\dots,t^{i_k}))=\{i_1,\dots
,i_k\}+S$. In particular, for the ideals $\fm^n$ one has
$v(\fm^n)=nM = M + \stackrel{n}{\cdots} +M$. Note that the element
$t^e$ generates a minimal reduction of $\fm$ and, in terms of
semigroups, $(n+1)M\subseteq nM$ for $n\geq 0$ (we will set
$\fm^0:=A$) and  $(n+1)M=e+nM$ for all $n\geq r$. Also, for these
rings, the first neighborhood ring $A'=k[[S']]$, is a numerical
semigroup ring, with $S'=\langle n_1, n_2-n_1, \dots ,b_b-n_1
\rangle$.

Let $A=k[[S]]$ be a numerical semigroup ring of multiplicity $e$
and reduction number $r$.

If we put
\[ \Ap (nM)=\{w_{n,0},\dots,\w_{n,i}, \dots ,\w_{n,e-1}\}\] for $n \geq
0$, then
\[ \fm^n= Wt^{\w_{n,0}} \oplus \cdots \oplus Wt^{\w_{n,i}} \oplus \cdots \oplus Wt^{\w_{n,e-1}}.\]

The set $\{t^{w_{0,0}},\dots , t^{w_{0,e-1}}\}$ is an Apery basis
of $k[[S]]$ (with respect to $x=t^e$ and also a BF-Apery basis)
and fixed $i$, $1\leq i \leq e-1$ one has that $w_{n+1
,i}=w_{n,i}+\epsilon \cdot e$ where $\epsilon \in \{0,1\}$ and
$w_{n+1 ,i}=w_{n,i}+ e$ for $n\geq r$. These facts are proved in
\cite[Lemma 2.1 and Lemma 2.2]{CZ2}.

 We will show that all the
invariants defined in the previous sections can be computed in
terms of the information contained in the Apery table:

\begin{table}[h]
%\caption{}
 %\label{Apery}
\renewcommand\arraystretch{1.5}
\[
\begin{array}{|c|c|c|c|c|c|c|}\hline
 \Ap(S)& \w_{0,0} &\w_{0,1}&\cdots  &
 \w_{0,i} &\cdots & \w_{0,e-1} \\
\hline
\Ap(M)& \w_{1,0} &\w_{1,1}&\cdots & \w_{1,i} & \cdots & \w_{1,e-1}\\
\hline \vdots& \vdots & \vdots &\vdots & \vdots & \vdots & \vdots
\\ \hline
\Ap(nM)&\w_{n,0} &\w_{n,1}&\cdots & \w_{n,i} & \cdots &
\w_{n,e-1}\\
\hline
\vdots & \vdots & \vdots &\vdots & \vdots & \vdots & \vdots \\
\hline
\Ap(rM) & \w_{r,0} &\w_{r,1}& \cdots & \w_{r,i} & \cdots  & \w_{r,e-1}\\
\hline  \end{array} \]
 \end{table}

Previously we recall the following notation introduced in
\cite{CZ2}.
\medskip

Let $E = \{w_0, \dots , w_m\}$ be a set of integers. We call it a
stair if $w_0 \leq \cdots \leq w_m$. Given a stair, we say that a
subset $L = \{ w_i, \dots, w_{i+k}\}$ with $k\geq 1$ is a landing
of length $k$ if $w_{i-1} < w_i = \cdots = w_{i+k} < w_{i+k+1}$
(where $w_{-1} = -\infty$ and $w_{m+1} = \infty$). In this case,
the index $i$ is the beginning of the landing: $s(L)$ and the
index $i+k$ is the end of the landing: $e(L)$. A landing $L$ is
said to be a true landing if $s(L) \geq 1$. Given two landings $L$
and $L'$, we set $L < L'$ if $s(L) < s(L')$. Let $l(E)+1$ be the
number of landings and assume that $L_0< \cdots <L_{l(E)}$ is the
set of landings. Then, we define following numbers:

\begin{itemize}

\item[$\cdot$] $s_j(E)= s(L_j)$, $e_j(E) = e(L_j)$, for each
$0\leq j \leq l(E)$;

\medskip

\item[$\cdot$] $c_j(E) = s_j - e_{j-1}$, for each $1 \leq j \leq
l(E)$.

\medskip

\item[$\cdot$] $k_j(E) = e_j - s_{j}$, for each $1 \leq j \leq
l(E)$.

\end{itemize}

With this notation, for any $1\leq i \leq e-1$, consider the
ladder of values $W^i=\{\w_{n,i}\}_{0\leq n\leq r}$, that is, the
columns of the Apery table,  and define the following integers:
\begin{enumerate}
\item $l_i = l(W^i)$; \item $d_i = e_{l_i}(W^i)$; \item
$b_j^i=e_{j-1}(W^i)$ and $c_j^i=c_j(W^i)$, for $1\leq j\leq l_i$.
\end{enumerate}
Then \cite[Theorem 2.3]{CZ2} says

\[G(\fm)\cong F(t^e)\oplus \bigoplus_{i=1}^{e-1}\left(
F(t^e)(-d_i)
 \bigoplus_{j=1}^{l_i} \frac{F(t^e)}{((t^e)^{\ast})^{c_j^i}F(t^e)}
(-b_j^i)\right).\]

Observe that

\begin{enumerate}
\item [(4)] $b_i=e_0(W^i)$.

\item [(5)] $d_i=b_i+(c^i_1+k^i_1)+\cdots +(c^i_{l_i}+k^i_{l_i})$.
\end{enumerate}

Observe also that if $\Ap(S')=\{\w'_0,\dots ,\w'_{e-1}\}$, then
$\w_{0,i}-\w'_i=a_i \cdot e$ for some positive integers and
\[
\begin{array}{lllll}
A'&= W\oplus & Wt^{\w'_1} &\oplus \cdots \oplus & Wt^{\w'_{e-1}} \\
A &= W\oplus & W(t^e)^{a_1}\cdot t^{\w'_1} &\oplus \cdots \oplus
& W(t^e)^{a_{e-1}}\cdot t^{\w'_{e-1}}
\end{array}
 \]
which show thats $\{ a_1, \dots ,a_{e-1}\}$ are the
micro-invariants of $A$. Moreover, from the equality
$\fm^r=(t^e)^rA'$ it is easy to see that
\begin{enumerate}
\item [(6)] $d_i=a_i+(c^i_1+\cdots + c^i_{l_i})$

\item [(7)] $a_i=b_i+(k^i_1+\cdots + k^i_{l_i})$.
\end{enumerate}

Hence, the Cohen-Macaulay property of the tangent cone is
equivalent to the no existence of true landings in the columns of
the Apery table. Also, each true landing gives a torsion cyclic
submodule of the tangent cone and its beginning and ending
determine the degree and the order of the corresponent torsion
submodule.

Note also that we can read the Hilbert function $H^0(n)=\mu
(\fm^n)$ in the Apery table as the number of steps between the nth
row and the (n+1)th row.
\bigskip

Suppose that $e$, the multiplicity of $S$ (equivalently the
multiplicity of $k[[S]]$), is given. We recall that then, the
embedding dimension $b$ and the reduction number $r$ satisfy $b\leq e$
and $r\leq e-1$. We will show that, in general, the couple $(e,b)$
does not determine the Apery table of $S$. However, in the
extremal cases $(e,2)$ and $(e,e)$ the Apery table is completely
determined.

\begin{ex} Suppose that $S$ has multiplicity $e$.

\begin{itemize}
\item For $b=2$,  we consider  $\{w_1,\dots,\w_{e-1}\}$, with
$\w_1< \cdots < \w_{e-1}$ a suitable permutation of
$\{w_{0,0},\dots,\w_{0,e-1}\}$ the apery set of $S$ (with this
notation $S=<e,\w_1>$). In this case the reduction number is $e-1$
and the Apery table is a square box:

\begin{table}[h]
%\caption{}
% \label{}
\renewcommand\arraystretch{1.5}
\[
\begin{array}{|c|c|c|c|c|c|}\hline

  0 &\w_1&\cdots  & \w_i &\cdots & \w_{e-1} \\\hline

 e &\w_1&\cdots & \w_i & \cdots & \w_{e-1}\\\hline

 \vdots&  \vdots &\vdots & \vdots & \vdots & \vdots\\ \hline

ie &\w_1 +(i-1)e&\cdots & \w_i & \cdots &\w_{e-1}\\\hline

\vdots & \vdots & \vdots &\vdots & \vdots & \vdots  \\\hline

 re &\w_1+(r-1)e& \cdots & \w_i+(r-i)e & \cdots  & \w_{e-1}\\
\hline  \end{array} \]
 \end{table}

So, for $1\leq i \leq e-1$ and observing the columns of the table
we have that $a_i=b_i=d_i=i$ and consequently $\alpha_i =\beta_i
=\gamma _i=1$ for $1\leq i \leq r$ as we proved in Corollary
\ref{b=2}. Moreover $\rho=e(e-1)/2$.
\newpage

\item For $b=e$ the reduction number $r$ is equal to $1$, $S$ is
minimally generated by the Apery set $\{w_{0,0},\dots,\w_{0,e-1}\}
$ and the Apery table has two rows:
\begin{table}[h]
%\caption{}
%\label{}
\renewcommand\arraystretch{1.5}
\[
\begin{array}{|c|c|c|c|c|c|}\hline
  0 &\w_{0,1}&\cdots  &
 \w_{0,i} &\cdots & \w_{0,e-1} \\
\hline
 e &\w_{0,1}&\cdots & \w_{0,i} & \cdots & \w_{0,e-1}\\
\hline
 \end{array}
\]
\end{table}

So, $a_i=b_i=d_i=1$ for $1\leq i \leq e-1$ and
$\alpha_1=\beta_1=\gamma_1=e-1$ recovering Corollary \ref{b=2} for
numerical semigroup rings. In this case $\rho =e-1$.
\end{itemize}
\end{ex}
\medskip

For $3\leq b\leq e-1$ there are several possibilities  for
the reduction number and the Apery table as shown by the following examples
for $e=5$.

The GAP - Groups, Algorithms, Programming - is a system for
Computational Discrete Algebra \cite{GAP4}. On the basis of GAP,
Manuel Delgado, Pedro A. Garcia-S\'{a}nchez and  Jos\'{e} Morais
have developed the NumericalSgps package \cite{NumericalSgps}. Its
aim is to make available a computational tool to deal with
numerical semigroups. We can determine the values of the diverse
families of invariants if we know the Apery sets of the sum ideals
$nM$, where $M$ is the maximal ideal of $S$. On the other hand,
from its definition we have that the Apery set of $nM$ can be
calculated as $ \Ap (nM)=nM \setminus ((e+S)+ nM)$, a computation
that can be performed by using the NumericalSgps package. The
following examples are just a sample of these computations.

\begin{ex} We assume in this example that $(e, b)=(5,3)$.

\begin{itemize}
\item Set $S=<5,6,7>$. The reduction number is $2$ and the Apery
table is in this case
\begin{table}[h]
\label{5,6,7}
\renewcommand\arraystretch{1.5}
\[
\begin{array}{|c|c|c|c|c|}\hline  0&6&7&13&14\\ \hline
5&6&7&13&14\\ \hline 10&11&12&13&14\\ \hline
  \end{array},
 \]
 \end{table}

\noindent so, $a_i=b_i=d_i$ for $1\leq i \leq 4$, $b_1=b_2=1$ and
$b_3=b_4=2$. Also $(\alpha_1,\alpha_2)=(\beta_1,\beta_2)=(\gamma_1,\gamma_2)=(2,2)$ and $\rho=6$.

 \item Set $S=<5,6,9>$. The reduction number is $3$ and
the Apery table in this case is

\begin{table}[h]
%\caption{} \label{5,6,9}
\renewcommand\arraystretch{1.5}
\[
\begin{array}{|c|c|c|c|c|}\hline  0&6&12&18&9\\ \hline
5&6&12&18&9\\ \hline 10&11&12&18&14\\ \hline 15&16&17&18&19\\
\hline
  \end{array},
 \]
 \end{table}

\noindent so, $a_i=b_i=d_i$, $b_1=b_4=1$, $b_2=2$ and $b_3=3$.
Also $(\alpha_1,\alpha_2, \alpha_3)=(\beta_1,\beta_2, \beta_3)=(\gamma_1,\gamma_2, \gamma_3)=(2,1,1)$ and $\rho=7$.

\item Set $S_1=<5,6,13>$, $S_2=<5,6,14>$ and $S_3=<5,6,19>$. The
reduction number in these cases is $4$.

The Apery table for $S_1$ is

\begin{table}[h]
%\label{5,6,13}
\renewcommand\arraystretch{1.5}
\[
\begin{array}{|c|c|c|c|c|}\hline  0&6&12&13&19\\ \hline
5&6&12&13&19\\ \hline 10&11&12&18&19\\ \hline 15&16&17&18&24\\
\hline 20&21&22&23&24\\ \hline
  \end{array}, \]
\end{table}

\noindent and so, the invariants are
\[
\begin{array}{l}
(\alpha_1,\alpha_2,\alpha_3,\alpha_4)=(1,1,1,1), \alpha_{1,1}=1,
\alpha_{2,1}=1,\\
 (\beta_1,\beta_2,\beta_3,\beta_4)=(1,2,1,0),\\
(\gamma_1,\gamma_2,\gamma_3,\gamma_4)=(2,2,0,0).\end{array}\]
%\[
%\begin{array}{l}
%b_1=a_1=d_1=1,\\ b_2=a_2=d_2=2,\\ b_3=1<a_3=2<d_3=3,\\
%b_4=2<a_4=3<d_4=4,\\
%(\alpha_1,\alpha_2,\alpha_3,\alpha_4)=(1,1,1,1),\\
%(\beta_1,\beta_2,\beta_3,\beta_4)=(1,2,1,0),\\
%(\gamma_1,\gamma_2,\gamma_3,\gamma_4)=(2,2,0,0)\end{array}\]

The Apery table for $S_2$ is

\begin{table}[h]
\renewcommand\arraystretch{1.5}
\[
\begin{array}{|c|c|c|c|c|}\hline  0&6&12&18&14\\ \hline
5&6&12&18&14\\ \hline 10&11&12&18&19\\ \hline 15&16&17&18&24\\
\hline 20&21&22&23&24\\ \hline
  \end{array}\]
  \end{table}

\noindent and their invariants are
\[
\begin{array}{l}
%b_1=a_1=d_1=1,\\
%b_2=a_2=d_2=2,\\
%b_3=a_3=d_3=3,\\
%b_4=1<a_4=2<d_4=4;\\
(\alpha_1,\alpha_2,\alpha_3,\alpha_4)=(1,1,1,1), \alpha_{1,2}=1\\
(\beta_1,\beta_2,\beta_3,\beta_4)=(1,2,1,0),\\
(\gamma_1,\gamma_2,\gamma_3,\gamma_4)=(2,1,1,0).\end{array}\]

Finally,  for $S_3$ the Apery table is
\begin{table}[h]
\renewcommand\arraystretch{1.5}
\[
\begin{array}{|c|c|c|c|c|}\hline  0&6&12&18&19\\ \hline
5&6&12&18&19\\ \hline 10&11&12&18&24\\ \hline 15&16&17&18&24\\
\hline 20&21&22&23&24\\ \hline
  \end{array}\]
 \end{table}

\noindent which produces the invariants
\[
\begin{array}{l}
%b_1=a_1=d_1=1,\\
%b_2=a_2=d_2=2,\\
%b_3=a_3=d_3=3,\\
%b_4=1<a_4=3<d_4=4;\\
(\alpha_1,\alpha_2,\alpha_3,\alpha_4)=(1,1,1,1), \alpha_{1,1}=1\\
(\beta_1,\beta_2,\beta_3,\beta_4)=(1,1,2,0),\\
(\gamma_1,\gamma_2,\gamma_3,\gamma_4)=(2,1,1,0).\end{array}
 \]

 \end{itemize}
Observe that $(e,b,r)$ neither determines the Apery table nor any
of the families of invariants.
\end{ex}

\begin{ex} Suppose that $(e,b)=(5,4)$
\begin{itemize}
\item Set $S=<5,6,7,8>$. In this case $r=2$, and analyzing its
Apery table
\begin{table}[h]
\renewcommand\arraystretch{1.5}
\[
\begin{array}{|c|c|c|c|c|}\hline  0&6&7&8&14\\ \hline
5&6&7&8&14\\ \hline 10&11&12&13&14\\ \hline
  \end{array},
 \]
 \end{table}

\noindent we obtain $(\alpha_1,\alpha_2)=(\beta_1,\beta_2)=(\gamma_1,\gamma_2)=(3,1)$.

\item Set $S=<5,6,9,13>$. In this case  $r=3$, the Apery table is

\begin{table}[h]
\renewcommand\arraystretch{1.5}
\[
\begin{array}{|c|c|c|c|c|}\hline  0&6&12&13&9\\ \hline
5&6&12&13&9\\ \hline 10&11&12&18&14\\ \hline 15&16&17&18&19\\
\hline
  \end{array},
 \]
 \end{table}

\noindent and
\[ \begin{array}{l}
(\alpha_1,\alpha_2,\alpha_3)=(2,1,1), \alpha_{1,1}=1,\\
(\beta_1,\beta_2,\beta_3)=(2,2,0),\\
(\gamma_1,\gamma_2,\gamma_3)=(3,1,0). \end{array}\]

 \item Set $S=<5,6,13,14>$. In this case $r=4$ and the Apery table

\begin{table}[h]
\renewcommand\arraystretch{1.5}
\[
\begin{array}{|c|c|c|c|c|}\hline  0&6&12&13&14\\ \hline
5&6&12&13&14\\ \hline 10&11&12&18&19\\ \hline 15&16&17&18&24\\
\hline 20&21&22&23&24\\ \hline
  \end{array}, \]
\end{table}

\noindent gives
\[\begin{array}{l}
(\alpha_1,\alpha_2,\alpha_3,\alpha_4)=(1,1,1,1),\alpha_{1,1}=1,\alpha_{1,2}=1,\\
(\beta_1,\beta_2,\beta_3,\beta_4)=(1,3,0,0),\\
(\gamma_1,\gamma_2,\gamma_3,\gamma_4)=(3,1,0,0).\end{array}\]

\end{itemize}
\end{ex}

\bibliographystyle{amsalpha}

\begin{thebibliography}{999}

\bibitem{B} L. Bryant, Filtered numerical semigroups and applications to one-dimensional rings, Ph.D Dissertation, Purdue University, 2009.

\bibitem{BF} V. Barucci, R. Fr\"{o}berg,
\textit{Associated graded rings of one-dimensional analytically
irreducible rings}, J. Algebra \textbf{304} (2006), 349--358.

\bibitem{CZ} T. Cortadellas, S. Zarzuela, \textit{
 On the structure of the Fiber Cone}, J. Algebra
 \textbf{317} (2007), 759--785.

 \bibitem{CZ2}  T. Cortadellas, S. Zarzuela, \textit{
 Tangent cones of numerical semigroups}, in Combinatorial Aspects of Commutative Algebra. Edited by: Viviana Ene
 and Ezra Miller; Contemporary Mathematics \textbf{502}, 45-58. American Mathematical Society, 2009.

\bibitem[NumericalSgps]{NumericalSgps}
M. Delgado, P. A. Garcia-S\'{a}nchez, J. Morais  ,
\textit{NumericalSgps - a GAP package, 0.95} (2006),
(http://www.gap-system.org/Packages/numericalsgps).


\bibitem{E} J. Elias, \textit{On the deep structure of the blowing-up of curve singularities},
Math. Proc. Camb. Phil. Soc. \textbf{131} (2001), 227--240.

\bibitem[GAP4]{GAP4}
The GAP Group,\textit{ GAP - Groups, Algorithms, and Programming-
Version 4.4.10} (2007) (http://www.gap-system.org).

\bibitem{JPV} A. V. Jayanthan, T. J. Puthenpurakal, J. K. Verma,
\textit{ On fiber cones of $\fm$-primary ideals}, Canad. J. Math.
\textbf{59}  (2007),  no. 1, 109--126.

\bibitem{L} J. Lipman, \textit{ Stable ideals and Arf rings},
 Amer. J. Math. \textbf{93} (1971), 649--685.

\bibitem{M} E. Matlis, \textit{1-Dimensional Cohen-Macaulay
Rings}, Lecture Notes in Mathematics, Springer-Verlag,
Berlin-Heidelberg-New York, 1973.

\bibitem{R} M. E. Rossi, \textit{A bound on the reduction number of a primary
ideal},  Proc. Amer. Math. Soc. 128 (2000), no. 5, 1325--1332.

\bibitem{S2} J. D. Sally,  Numbers of generators of ideals in local rings. Marcel Dekker,
Inc., New York-Basel, 1978.

\bibitem{VV} P. Valabrega, G. Valla, \textit{
 Form rings and regular sequences}, Nagoya Math. J.  \textbf{72} (1978), 93-101.


\end{thebibliography}

\end{document}